\numberwithin{equation}{section}
\numberwithin{table}{section} 
\theoremstyle{plain}
\newtheorem{thm}{Theorem}[section]
\newtheorem{thm*}{Theorem}
\newtheorem{lem}[thm]{Lemma}
\newtheorem{cor}[thm]{Corollary}
\theoremstyle{definition}
\newtheorem{defi}[thm]{Definition}
\newtheorem{exa}[thm]{Example}
\newtheorem{situation}[thm]{Situation}
\newtheorem{rem}[thm]{Remark}
\newtheorem{problem}[thm]{Problem}
\newcommand{\mm}{\mathfrak m}
\newcommand{\F}{\mathbb{F}}
\newcommand{\N}{\mathbb{N}}
\newcommand{\Prim}{\mathbb{P}}
\newcommand{\Q}{\mathbb{Q}}
\newcommand{\Z}{\mathbb{Z}}
\newcommand{\Cc}{\mathcal{C}}
\newcommand{\Ec}{\mathcal{E}}
\newcommand{\Fc}{\mathcal{F}}
\newcommand{\Lc}{\mathcal{L}}
\newcommand{\Oc}{\mathcal{O}}
\newcommand{\Sc}{\mathcal{S}}
\newcommand{\shM}{\mathcal{M}}
\newcommand{\Th}{\text{h}}
\DeclareMathOperator{\chara}{char}
\DeclareMathOperator{\dirsum}{\oplus}
\DeclareMathOperator{\coKer}{coker}
\DeclareMathOperator{\Dim}{dim}
\DeclareMathOperator{\Spec}{Spec}
\DeclareMathOperator{\lK}{H}
\DeclareMathOperator{\Syz}{Syz}
\DeclareMathOperator{\Proj}{Proj}
\DeclareMathOperator{\Deg}{deg}
\DeclareMathOperator{\Rank}{rank}
\DeclareMathOperator{\HKF}{HK}
\DeclareMathOperator{\HKM}{e_{HK}}
\DeclareMathOperator{\sg}{syzgap}
\DeclareMathOperator{\frob}{F}
\newcommand{\fpb}[1]{\frob^{#1\ast}}
\newcommand{\ra}{\rightarrow}
\newcommand{\lra}{\longrightarrow}
\newcommand{\qpot}{^{[q]}}
\newcommand{\lto}{\longrightarrow}
\newcommand  {\fl}[1]      {\lfloor #1 \rfloor}
\begin{document}

\subjclass[2010]{
13A35, 
13D02, 
13D40, 
14H45, 
14H60 
}

\keywords{Fermat curve, Hilbert-Kunz function, vector bundle, strongly semistable, Frobenius periodicity, Hilbert-series, syzygy module, projective dimension}

\author{Daniel Brinkmann and Almar Kaid}

\address{Universit\"at Osnabr\"uck, Fachbereich 6: Mathematik/Informatik, Albrechtstr. 28a,
49069 Osnabr\"uck, Germany}

\email{dabrinkm@uni-osnabrueck.de and akaid@uni-osnabrueck.de}


\title[Rank-$2$ syzygy bundles on Fermat curves]{Rank-$2$ syzygy bundles on Fermat curves and an application to Hilbert-Kunz functions}

\date{\today}

\begin{abstract}
In this paper we describe the Frobenius pull-backs of the syzygy bundles $\Syz_C(X^a,Y^a,Z^a)$, $a\geq 1$, on the projective Fermat curve $C$ of degree $n$ in characteristics coprime to $n$, either by giving their strong Harder-Narasimhan filtration if $\Syz_C(X^a,Y^a,Z^a)$ is not strongly semistable or in the strongly semistable case by their periodicity behavior. 
Moreover, we apply these results to Hilbert-Kunz functions, to find Frobenius periodicities of the restricted cotangent bundle $\Omega_{\Prim^2}|_C$ of arbitrary length and a problem of Brenner regarding primes with strongly semi\-stable reduction.
\end{abstract}

\maketitle

\vspace*{-0.7cm}

\section*{Introduction}
In this paper we study rank-$2$ syzygy bundles for certain monomial ideals, namely the ideals $(X^a,Y^a,Z^a)$ for $a \geq 1$ on a smooth projective Fermat curve
$C:=\Proj(k[X,Y,Z]/(X^n+Y^n+Z^n))$. We focus mainly on positive characteristic, where semistability is not preserved by the Frobenius morphism. The first example 
is the restricted cotangent bundle $\Omega_{\Prim^2}|_C$ from the projective plane which can be identified with the syzygy bundle $\Syz_C(X,Y,Z)$.
For this bundle semistability is well known (if $n\geq 2$) but no complete answer is known regarding strong semistability, i.e., for what characteristics and curve degrees is $\fpb{e}(\Omega_{\Prim^2}|_C)$ semistable for all $e \geq 0$. 

The bundles $\Syz_C(X^a,Y^a,Z^a)$ on Fermat curves exhibit important arithmetic properties of vector
bundles in general. For instance, Brenner proved in \cite{brennerstronglysemistable} with this setup that there exists no restriction theorem for strong semistability of Bogomolov type. In \cite{miyaoka} he used the syzygy bundle $\Syz_C(X^2,Y^2,Z^2)$ on the Fermat quintic to show that strong semistability is not an open property in arithmetic deformations which was conjectured
by N. I. Shepherd-Barron. In \cite{brennerkaidfrobeniusdescent} Brenner and the second author disproved a conjecture of Joshi, which is related to the Grothendieck $p$-curvature conjecture, exploiting the arithmetic properties of Fermat curves and syzygy bundles. Despite these papers, there has been no complete treatment of 
the semistability properties of the bundles $\Syz_C(X^a,Y^a,Z^a)$ on Fermat curves.

Fermat rings are also popular examples for characteristic $p$ methods in commutative algebra, in particular \textit{Hilbert-Kunz theory} and the theory of \textit{tight closure}. Monsky and Han exploited Fermat rings to obtain explicit computations for the 
Hilbert-Kunz multiplicity and the Hilbert-Kunz function (see \cite{monskyhan}, \cite{handiss}, \cite{mason} and \cite{irred}), which we partially complement in this paper. Due to work of Brenner \cite{bredim2} and Trivedi \cite{tri1} there is a strong relationship to strongly semistable vector bundles which is our main tool to compute Hilbert-Kunz functions in this paper. We remark that for the arithmetic behavior of tight closure the ideals
$(X^a,Y^a,Z^a)$ which are topic of this paper are useful examples (see \cite{brennerkatzmanarithmetic}, \cite{singhcomputation}).

In Section \ref{sec:hkviavb} we recall the necessary notations of Hilbert-Kunz theory and vector bundles and discuss their relationship mentioned above for syzygy bundles of rank $2$. Afterwards, we turn our attention to \textit{syzygy gaps} and how they appear in Han's and Monsky's computations \cite{monskyhan} of Hilbert-Kunz multiplicities of ideals generated by a fixed (positive) power of the variables in two-dimensional Fermat rings.

Section \ref{sec:periodicity} is devoted to the case where the bundle $\Syz_C(X^a,Y^a,Z^a)$, $a\geq 1$, is strongly semistable. We will show how work of Kustin, Rahmati and Vraciu \cite{vraciu}
can be applied in this situation to obtain a periodicity up to twist of the Frobenius pull-backs of these bundles as well as a sharp bound for the length of this periodicity 
(cf. Theorem \ref{frobpernontri}). This periodic behavior enables us to compute the Hilbert-Kunz functions of the ideals $(X^a,Y^a,Z^a)$, $a\geq 1$, in the 
Fermat rings $k[X,Y,Z]/(X^n+Y^n+Z^n)$ (cf. Theorem \ref{hkfsss}) and allows us to generalize a theorem of Brenner and the second author 
(cf. Theorem \ref{p+-1}).

In Section \ref{sec:possibleperiods} we will use the methods developed in Section \ref{sec:periodicity} to construct for every odd prime $p$ and every natural number $l$ an $n$ such that 
the Frobenius pull-backs of $\Omega_{\Prim^2}|_C$ admit up to twist a periodicity of length $l$, where $C$ is the projective Fermat curve of degree $n$ (see Example 4.1). 
Moreover, we will see in Example 4.2 that in characteristic two, the Frobenius pull-backs of the restricted cotangent bundle admit a periodicity up to twist 
if and only if $n=3$.

In Section \ref{nonstronglysemistablesection} we investigate the case where $\Syz_C(X^a,Y^a,Z^a)$ is not strongly semi\-stable. In this case we can explicitly describe the minimal $e_0$ such 
that the $e_0$-th Frobenius pull-back of this bundle has a strong Harder-Narasimhan filtration. Moreover, this filtration is explicitly computed 
(cf. Theorem \ref{almarfilt}). This result is used to complete the computations of Hilbert-Kunz functions from Section \ref{sec:periodicity} (cf. Corollary \ref{almarhkf}). The explicit computation of the entire Hilbert-Kunz function
for ideals $(X^a,Y^a,Z^a)$ with $a > 1$ is new (at least to the best knowledge of the authors of this paper). 

Finally, in Section \ref{miyaokasection} we deal with a special instance of the Miyaoka problem proposed by Brenner, i.e., the question how the property of $\Sc:=\Syz_C(X^a,Y^a,Z^a)$ being strongly semistable depends on the parameter $a$, 
the characteristic $p$ and the degree $n$ of the projective Fermat curve $C$. Theorem \ref{fermatgenericfibersemistability} will show that $\Sc$ is semistable 
in characteristic zero if and only if it is strongly semistable in all characteristics $p\equiv \pm 1$ modulo $2n$. From this result we will deduce that 
the Harder-Narasimhan filtration of $\Sc$ in characteristic zero can be computed via reduction modulo $p$ (cf. Theorem \ref{hnfiltrationcharzero}).

We remark that most of the results in this paper can easily be translated into algorithms that are suitable for a computer algebra system (e.g., \cite{CocoaSystem}).

We would like to thank Holger Brenner for many useful discussions and comments. We also thank Axel St\"abler for his corrections to this paper. The results of Sections \ref{sec:periodicity} and \ref{sec:possibleperiods} 
are contained in Chapter 7 of the first author's PhD thesis \cite{drdaniel} and those of Sections \ref{nonstronglysemistablesection} and \ref{miyaokasection} are 
contained in Chapter 4 of the PhD thesis \cite{almar} of the second author.

\section{Preliminaries}
\label{sec:hkviavb}
Let $(R,\mm)$ be a local Noetherian ring of characteristic $p>0$ and dimension $d$. Let $I=(f_1,\ldots,f_m)$ be an $\mm$-primary ideal. Let $\lambda_R$ 
denote the length function for $R$-modules. The function 
$$\HKF(I,p^e):\N\lto\N, ~ e\longmapsto\lambda_R\left(R/\left(f_1^{p^e},\ldots,f_m^{p^e}\right)\right)$$
is called the \textit{Hilbert-Kunz function} of $I$. We call the limit
$$\lim_{e\ra\infty}\frac{\HKF(I,p^e)}{p^{ed}},$$
whose existence was proven by Monsky in \cite{hkmexists}, the \textit{Hilbert-Kunz multiplicity} of $I$. We denote this limit by $\HKM(I)$ and call 
$\HKM(R):=\HKM(\mm)$ the Hilbert-Kunz multiplicity of $R$. Sometimes we will use the symbol $(f_1,\ldots,f_m)^{[p^e]}$ to denote the ideal 
$(f_1^{p^e},\ldots,f_m^{p^e})$.

Throughout this paper we are mainly interested in the graded situation, where $k$ is an algebraically closed field and $R$ a standard-graded normal $k$-algebra of dimension $2$. Moreover, we consider homogeneous elements $f_1,f_2,f_3 \in R$ with $\Deg(f_1) = \Deg(f_2) = \Deg(f_3)=  a \geq 1$ such that the ideal $I:=(f_1, f_2, f_3) \subset R$ is $R_+$-primary. These elements give rise to the short exact (presenting) sequences
$$0\lra\Syz_C(f_1,f_2,f_3)(m)\lra\bigoplus_{i=1}^3\Oc_C(m-a)\stackrel{f_1,f_2,f_3}{\lra}\Oc_C(m)\lra 0,$$
for all $m \in \Z$ on the smooth projective curve $C:=\Proj(R)$. The kernel sheaf $\Syz_C(f_1,f_2,f_3)$ is locally free  of rank $2$ and is called the \textit{syzygy bundle} for $f_1,f_2,f_3$.
In the case $\chara(k)=p>0$ the Hilbert-Kunz function of the ideal $I$ can be computed via the formula
\begin{align}\label{eq:hkfgeom}
\begin{split}
\Dim_k\left(R/\left(f_1^{p^e},f_2^{p^e},f_3^{p^e}\right)\right)_m & = ~~ \Th^0(C,\Oc_C(m))-\sum_{i=1}^3\Th^0(C,\Oc_C(m-p^ea)) \\
 & +\Th^0\left(C,\Syz_C\left(f_1^{p^e},f_2^{p^e},f_3^{p^e}\right)(m)\right)
\end{split}
\end{align}
and summation over all $m\in\Z$ (see for instance \cite{bredim2}).
If $\Sc$ is a vector bundle on a smooth projective curve over an algebraically closed field $k$ we define its \textit{slope} by the quotient 
$\mu(\Sc):=\frac{\Deg(\Sc)}{\Rank(\Sc)}$ with $\Deg(\Sc):=\Deg(\bigwedge^{\Rank{\Sc}}(\Sc))$. We recall that $\Ec$ is \textit{semistable} if for every non-trivial locally free subsheaf $\Fc$ the inequality $\mu(\Fc) \leq \mu(\Ec)$ holds. The bundle $\Ec$ is \textit{stable} if this inequality is always strict. Hence the rank-$2$ syzygy bundle $ \Sc:=\Syz_C(f_1,f_2,f_3)$  is not semistable if and only if there exists a line bundle $0 \neq \Lc \subset \Ec$ with $\Deg(\Lc) > \mu(\Sc) = \Deg(\Sc)/2 = -3an/2$, where $n = \Deg(C)$. For such a line bundle of maximal degree this filtration constitutes the so-called \textit{Harder-Narasimhan filtration} (or \textit{HN-filtration}) of $\Sc$ and the line bundle $\Lc$ is the \textit{(maximal) destabilizing subbundle}. We often write the HN-filtration of a rank-$2$ vector bundle as a short exact sequence $0 \ra \Lc \ra \Sc \ra \shM \ra 0$, where the quotient $\shM$ is a line bundle with $\Deg(\shM) < \Deg(\Lc)$. Note that concept and existence of a HN-filtration is 
trivial 
for bundles of rank $2$ but for bundles of higher rank it is much more complicated (see \cite{hnfilt}). 

In positive characteristic $p$ we consider the \textit{absolute Frobenius morphism} $\frob: C \ra C$ which is the identity on the curve $C$ and the $p$-th power map on the structure sheaf $\Oc_C$. It is well-known that the Frobenius pull-back $\fpb{}(\Sc)$ of a semistable vector bundle $\Sc$  is in general not semistable
(see for instance \cite[Example 3.2]{amplevboncurves} for Serre's counter example). If $\fpb{e}(\Sc)$ is semistable for all $e\geq 0$ then $\Sc$ is 
called \textit{strongly semistable}. This notion is due to Miyaoka (cf. \cite[Section 5]{defsss}). In the case of a rank-$2$ vector bundle 
the HN-filtration $0 \ra \Lc \ra \fpb{e}(\Sc) \ra \shM \ra 0$ of  a (non-semistable) Frobenius pull-back $\fpb{e}(\Sc)$ is called the 
\textit{strong HN-filtration} of the bundle $\Sc$. We indicate that, as for the HN-filtration itself, the concept and existence of a strong 
HN-filtration is highly non-trivial for vector bundles of higher rank (see \cite{langer} for a detailed account).

Due to the work of Brenner \cite{bredim2} and Trivedi \cite{tri1} strongly semistable vector bundles are closely related to Hilbert-Kunz 
theory. We only state this connection for the rank-$2$ vector bundles which we consider in this paper.

\begin{thm}
\label{planecurvehkformula}
Let $C=V_+(G) \subset \Prim^2$ denote a smooth plane curve of degree $n$ over an algebraically closed field $k$ of positive characteristic $p$. 
Let $R=k[X,Y,Z]/(G)$ be its homogeneous coordinate ring and let $I=(f_1,f_2,f_3)$ denote a homogeneous
$R_+$-primary ideal such that $\deg(f_1)=\deg(f_2)=\deg(f_3)=a$. Let $\Sc:=\Syz_C(f_1,f_2,f_3)$. Then the following hold.
\begin{enumerate}
\item The bundle $\Sc$ is strongly semistable if and only if $\HKM(I) = \frac{3n}{4} a^2$.
\item If $\Sc$ is not strongly semistable with strong Harder-Narasimhan filtration $0 \ra \Lc \ra \fpb{e}(\Sc) \ra \shM \ra 0$, then 
$$\HKM(I)=n\left(\left(\frac{\Deg(\Lc)}{np^e}+\frac{3a}{2}\right)^2+\frac{3a^2}{4}\right).$$
Moreover, we have
$$\Deg(\Lc)=-\frac{3anp^e}{2} + np^e\cdot\sqrt{-\frac{3a^2}{4}+\frac{\HKM(I)}{n}}\in\left(-\frac{3anp^e}{2},-anp^e\right].$$
\item If $\Sc$ is not semistable, then the Hilbert-Kunz multiplicity of $I$ equals
$$\HKM(I) = \frac{3n}{4} a^2+ \frac{\ell^2}{4n},$$
where $0< \ell \leq na$ is an integer with $\ell \equiv an \!\!\mod 2$.
\item If the bundle $\Sc$ is semistable, but not strongly semistable, then the Hilbert-Kunz multiplicity of $I$
equals
$$\HKM(I) =\frac{3n}{4} a^2+ \frac{\ell^2}{4np^{2e}},$$
where $e\geq 1$ is the number such that $\fpb{e-1}(\Sc)$ is semistable and $\fpb{e}(\Sc)$ is not semistable and
$0< \ell \leq n(n-3)$ is an integer with $\ell \equiv pna \!\!\mod 2$.
\end{enumerate}
\end{thm}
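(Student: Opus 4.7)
The plan is to sum the cohomological formula (1.1) over all $m \in \Z$. Setting $\Sc_e := \fpb{e}(\Sc) \cong \Syz_C(f_1^{p^e}, f_2^{p^e}, f_3^{p^e})$, this gives
$$\HKF(I, p^e) = \sum_{m \in \Z} \bigl[\Th^0(C,\Oc_C(m)) - 3\,\Th^0(C,\Oc_C(m - p^e a)) + \Th^0(C,\Sc_e(m))\bigr].$$
The first two summands contribute $\frac{3n}{4}a^2 p^{2e} + O(p^e)$ by Riemann--Roch on $C$ (of genus $g = (n-1)(n-2)/2$, so $2g-2 = n(n-3)$), and the whole problem reduces to controlling $\sum_m \Th^0(C,\Sc_e(m))$ from the (strong) Harder--Narasimhan filtration.

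For the forward direction of (1), since every $\Sc_e$ is semistable of slope $-\frac{3anp^e}{2}$, a standard semistability argument forces $\Th^0 = 0$ whenever $\mu(\Sc_e(m)) < 0$, while Serre duality gives $\Th^0 = \chi$ once $\mu(\Sc_e(m)) > 2g-2$. The transitional range of $m$ has width bounded independently of $e$, so its contribution is $O(p^e)$ and is absorbed in the limit, yielding $\HKM(I) = \frac{3n}{4}a^2$.

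For (2)--(4), I would exploit the strong HN-filtration $0 \to \Lc \to \Sc_e \to \shM \to 0$. The long exact sequence gives $\Th^0(C,\Sc_e(m)) = \Th^0(C,\Lc(m)) + \Th^0(C,\shM(m))$ outside a bounded range of $m$, and the extra range, where $\Lc(m)$ has sections but $\shM(m)$ does not, produces upon summation the quadratic correction $(2\Deg(\Lc)+3anp^e)^2/(4np^{2e})$, which after addition to $\frac{3n}{4}a^2$ is exactly the displayed formula (2). The upper bound $\Deg(\Lc) \leq -anp^e$ there comes from the embedding $\Sc_e \hookrightarrow \Oc_C(-ap^e)^3$: any line subbundle $\Lc$ satisfies $\Lc(ap^e) \hookrightarrow \Oc_C^3$, and any nonzero component map $\Lc(ap^e) \to \Oc_C$ forces $\Deg(\Lc(ap^e)) \leq 0$. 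Setting $\ell := 2\Deg(\Lc) + 3anp^e = \Deg(\Lc) - \Deg(\shM)$ then yields (3) at $e = 0$ and (4) at the smallest $e \geq 1$ with $\fpb{e-1}(\Sc)$ semistable and $\fpb{e}(\Sc)$ not; the parity assertions are immediate. The hardest step will be the sharp bound $\ell \leq n(n-3) = 2g-2$ in (4), which I would invoke from the Shepherd-Barron / X.\ Sun slope-gap theorem for Frobenius applied to the semistable bundle $\fpb{e-1}(\Sc)$. The converse of (1) follows because by (3) or (4) any non-strongly-semistable $\Sc$ contributes the strictly positive correction $\ell^2/(4np^{2e}) > 0$ to $\HKM(I)$.
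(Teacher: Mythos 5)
Your proposal is correct and is essentially the argument of Brenner and Trivedi that the paper simply cites for this theorem: summation of \eqref{eq:hkfgeom} over $m$, the Harder--Narasimhan correction term $(\Deg\Lc-\Deg\shM)^2/(4np^{2e})=\ell^2/(4np^{2e})$, the bound $\Deg(\Lc)\leq -anp^e$ extracted from the inclusion $\fpb{e}(\Sc)\hookrightarrow\Oc_C(-ap^e)^3$, and the Shepherd--Barron bound $\mu_{\max}-\mu_{\min}\leq 2g-2=n(n-3)$ for the Frobenius pull-back of a semistable rank-$2$ bundle in part (4). The one slip is the claim that the first two summands of \eqref{eq:hkfgeom} contribute $\tfrac{3n}{4}a^2p^{2e}+O(p^e)$: the three pieces do not converge separately when summed over $m$ (only their combination does), and the constant $\tfrac{3n}{4}a^2$ emerges only after adding the Euler-characteristic part of $\sum_m \Th^0(C,\fpb{e}(\Sc)(m))$ --- which your Riemann--Roch/Serre-duality step supplies anyway, so none of the final formulas are affected.
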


\begin{proof}
For the proof of (1) and (2) see \cite[proof of Corollary 4.6]{bredim2}. For the proof of (3) and (4) see \cite[proof of Theorem 5.3]{tri1}. In all three cases the quoted proofs only consider the case $a = 1$ but they are easy to generalize. For an explicit proof see \cite[proofs of Corollary 1.4.9 and Proposition 1.4.11]{almar}.  
\end{proof}
Next, we discuss syzygy gaps and how they appear in the computation of Hilbert-Kunz multiplicities. The final result of Han and Monsky (cf. Theorem \ref{hkmfermat}) will combine the 
case by case description of the Hilbert-Kunz multiplicities of the ideals $(X^a,Y^a,Z^a)$ from the last theorem. Moreover, Theorem \ref{hkmfermat} will give us a numerical criterion to check the strong semistability of the sheafs $\Syz_C(X^a,Y^a,Z^a)$.

\begin{defi}
Let $R:=k[X,Y]$ and let $f_1$, $f_2$, $f_3\in R$ be non-zero and homogeneous. By Hilbert's Syzygy Theorem we have a splitting 
$\Syz_R(f_1,f_2,f_3)\cong R(-\alpha)\oplus R(-\beta)$ 
for some $\beta\leq \alpha \in\N$. We call the difference $\alpha-\beta$ the \textit{syzygy gap of $f_1$, $f_2$, $f_3$} and denote it by
$\sg(f_1,f_2,f_3)$.
In the special case $f_1=X^a$, $f_2=Y^b$, $f_3=(X+Y)^c$ for some positive integers $a$, $b$, $c$, we denote $\sg(X^a,Y^b,(X+Y)^c)$ by $\delta(a,b,c)$.
\end{defi}

One can show that $\delta$ extends to a unique continuous function $\delta^{\ast}$ on $[0,\infty)^3$ with the property 
$|\delta(t)-\delta(s)|\leq \left\Vert t-s\right\Vert_1$. Moreover, if the underlying polynomial ring is defined over a field of 
positive characteristic $p$, the extension of $\delta$ satisfies 
$\delta^{\ast}\left(\frac{t}{p}\right)=p^{-1}\cdot\delta^{\ast}(t).$ 
In what follows we will not distinguish between $\delta$ and $\delta^{\ast}$. Our next goal is to explain how $\delta$ can be computed. 
The reference is Han's thesis \cite{handiss} resp. \cite{mason} for an alternative proof. 

We are interested in the \emph{taxicab distance} of elements of the form $\tfrac{t}{p^s}$ to the set 
$$L_{\text{odd}}:=\left\{u\in\N^3|u_1+u_2+u_3\text{ is odd}\right\},$$
where $s$ is an integer and $t$ a three-tuple of non-negative real numbers.
Note that for given $\tfrac{t}{p^s}$ there is at most one $u\in L_{\text{odd}}$ satisfying $\left\Vert \frac{t}{p^s}-u\right\Vert_1<1.$ 
Moreover, the only candidates for $u_i$ are the rounding ups and rounding downs of $\tfrac{t_i}{p^s}$.

\begin{thm}[Han]\label{hansthm}
Let $t=(t_1,t_2,t_3)\in[0,\infty)^3.$ If the $t_i$ do not satisfy the strict triangle inequality (w.l.o.g. $t_1\geq t_2+t_3$), we have $\delta(t)=t_1-t_2-t_3.$
If the $t_i$ satisfy the strict triangle inequality and there are $s\in \Z$, $u\in L_{odd}$ with $\left\Vert p^st-u\right\Vert_1<1$, then there is such a pair $(s,u)$ with minimal $s$ and with this pair $(s,u)$ we get
$$\delta(t)=\frac{1}{p^s}\cdot\left(1-\left\Vert p^st-u\right\Vert_1\right).$$ Otherwise, one has $\delta(t)=0$.
\end{thm}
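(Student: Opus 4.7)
The plan is to reduce the statement to integer triples by exploiting the two defining properties of the extension $\delta^{\ast}$: the Lipschitz bound $|\delta^{\ast}(t)-\delta^{\ast}(s)|\leq\left\Vert t-s\right\Vert_1$ and the Frobenius-scaling identity $\delta^{\ast}(pt)=p\,\delta^{\ast}(t)$, which follows from the stated scaling by substituting $t \mapsto pt$.

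In the non-triangle case $t_1 \geq t_2+t_3$, continuity reduces to integer triples $(a,b,c)$ with $a \geq b+c$. Since $Y$ and $X+Y$ are coprime in $k[X,Y]$, the pair $Y^b, (X+Y)^c$ is a regular sequence and the zero-dimensional complete intersection $k[X,Y]/(Y^b,(X+Y)^c)$ has socle degree $b+c-2$. Hence $X^a \in (Y^b,(X+Y)^c)$, which produces a syzygy of total degree $a$; together with the Koszul syzygy $(0,(X+Y)^c,-Y^b)$ of total degree $b+c$ these form a Hilbert-Burch matrix whose $2\times 2$ minors recover the three generators. Consequently $\Syz_R(X^a,Y^b,(X+Y)^c) \cong R(-a) \oplus R(-(b+c))$, so $\delta(a,b,c)=a-b-c$, and $\delta^{\ast}(t)=t_1-t_2-t_3$ extends to the real non-triangle region by continuity.

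For the strict-triangle case, the essential base point is $\delta(u)=1$ for every $u \in L_{\text{odd}}$ satisfying the strict triangle inequality. The Hilbert-Burch theorem forces the two syzygy degrees $\alpha \geq \beta$ to satisfy $\alpha+\beta=u_1+u_2+u_3$, so the odd-sum parity already yields $\delta(u)=\alpha-\beta \geq 1$. The matching upper bound $\delta(u)\leq 1$ requires exhibiting an explicit syzygy of degree $(u_1+u_2+u_3+1)/2$; this is the combinatorial heart of Han's argument and the main technical obstacle that has to be overcome.

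Granting this base case, for a general real $t$ in the strict-triangle region one applies the scaling identity $\delta^{\ast}(t) = p^{-s}\delta^{\ast}(p^s t)$ at the minimal $s$ such that some $u \in L_{\text{odd}}$ satisfies $\left\Vert p^s t-u\right\Vert_1 < 1$. The Lipschitz bound localizes $\delta^{\ast}(p^s t)$ to the interval $[1-\left\Vert p^s t-u\right\Vert_1,\,1+\left\Vert p^s t-u\right\Vert_1]$, and a perturbed version of the explicit syzygy construction used for $\delta(u)=1$ provides the matching upper bound, pinning down $\delta^{\ast}(p^s t) = 1-\left\Vert p^s t-u\right\Vert_1$ exactly; the minimality of $s$ is used here to rule out any competing nearby odd lattice point at a coarser scale. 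When no such pair $(s,u)$ exists, the same combination of scaling and Lipschitz forces $\delta^{\ast}(t)=0$: any positive value $p^{-s}m$ of $\delta^{\ast}(t)$ would, by the Lipschitz inequality applied at scale $s$, force $p^s t$ to lie within $\ell_1$-distance $1-m<1$ of an odd lattice point at that scale, contradicting the hypothesis.
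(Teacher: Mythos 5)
The paper offers no proof of this statement at all --- it is imported from Han's thesis, with \cite{mason} cited for an alternative argument --- so your proposal has to stand on its own. Your treatment of the degenerate case $t_1\geq t_2+t_3$ is essentially fine (the socle-degree bound plus Hilbert--Burch does give $\Syz\cong R(-a)\oplus R(-(b+c))$, and density of the points $u/p^k$ together with the scaling identity passes this to real triples). The strict-triangle case, however, rests on a base case that is false as stated. You claim $\delta(u)=1$ for \emph{every} $u\in L_{\text{odd}}$ satisfying the strict triangle inequality. Take $u=(p,p,p)$ with $p$ odd: the coordinate sum $3p$ is odd and the strict triangle inequality holds, yet the scaling identity forces $\delta(p,p,p)=p\,\delta(1,1,1)=p\neq 1$. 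Concretely, in characteristic $3$ one has $(X+Y)^3=X^3+Y^3$, so $(1,1,-1)$ is a syzygy of total degree $3$ for $X^3,Y^3,(X+Y)^3$, whence $\alpha=6$, $\beta=3$ and $\delta(3,3,3)=3$. This is exactly why the theorem quantifies over all $s\in\Z$, \emph{negative values included}, and takes the minimal one: for $(3,3,3)$ in characteristic $3$ the minimal pair is $s=-1$, $u=(1,1,1)$, giving $\delta=3^{1}\cdot(1-0)=3$. Your parity argument correctly yields the lower bound $\delta(u)\geq 1$, but the matching upper bound $\delta(u)\leq 1$ --- which you defer as ``the combinatorial heart'' --- is simply not true without the minimality hypothesis on $s$, so the deferred step cannot be filled in as described; the reduction has to be organized around the minimal $s$ from the outset.

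The final clause also does not follow from what you have. To conclude $\delta^{\ast}(t)=0$ when no pair $(s,u)$ exists, you invoke the Lipschitz bound ``at scale $s$''; but Lipschitz continuity from points where $\delta^{\ast}=1$ only yields the lower bound $\delta^{\ast}(p^st)\geq 1-\Vert p^st-u\Vert_1$, which is vacuous once the distance is $\geq 1$. It can never produce the required \emph{upper} bound $\delta^{\ast}(t)\leq 0$. For that you must exhibit points where $\delta^{\ast}$ vanishes arbitrarily close to $p^st$ for suitable $s$ (for instance, suitable even-sum lattice points after rescaling), and identifying that vanishing locus --- together with the explicit syzygy of degree $\tfrac{1}{2}(u_1+u_2+u_3+1)$ in the minimal-$s$ case and its perturbed version --- is precisely the content of Han's computation, none of which appears in the proposal. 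As it stands, the argument establishes only the degenerate case and the easy half of the remaining inequalities.
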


With the help of the $\delta$-function we can state the following theorem.

\begin{thm}[Han, Monsky]\label{hkmfermat} The Hilbert-Kunz multiplicity of an ideal $I:=(X^a,Y^a,Z^a)$, $a\geq 1$, of the Fermat ring $k[X,Y,Z]/(X^n+Y^n+Z^n)$ equals
$$\frac{3a^2n}{4}+\frac{n^3}{4}\cdot\delta\left(\frac{a}{n},\frac{a}{n},\frac{a}{n}\right)^2.$$
\end{thm}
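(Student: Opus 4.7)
The plan is to combine Theorem \ref{planecurvehkformula} with Theorem \ref{hansthm}. Set $\Sc := \Syz_C(X^a,Y^a,Z^a)$ and write $\beta := 0$ if $\Sc$ is strongly semistable and $\beta := \Deg(\Lc)/(np^e) + 3a/2 \in (0,a/2]$ otherwise, where $\Lc \subset \fpb{e}(\Sc)$ is the maximal destabilizing subbundle of the first non-semistable Frobenius pull-back. Parts (1) and (2) of Theorem \ref{planecurvehkformula} combine to give
$$\HKM(I) = \frac{3a^2 n}{4} + n\beta^2.$$
Matching this against the claim, it suffices to prove
$$\beta = \frac{n}{2}\,\delta\!\left(\tfrac{a}{n},\tfrac{a}{n},\tfrac{a}{n}\right). \qquad (\ast)$$

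In the strongly semistable case, $(\ast)$ reduces to $\delta(a/n,a/n,a/n) = 0$. By Theorem \ref{hansthm} this amounts to the non-existence of a pair $(s,u) \in \Z \times L_{\text{odd}}$ with $\|p^s(a/n,a/n,a/n) - u\|_1 < 1$. In the computation below, such a pair would produce a line subbundle of $\fpb{e}(\Sc)$ for a suitable $e$ violating semistability, so its absence corresponds precisely to strong semistability.

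The heart of the argument is the non-strongly-semistable case, where one identifies $\Deg(\Lc)$ with a syzygy gap over $k[X,Y]$. The Fermat relation $Z^n = -(X^n+Y^n)$ in $R$ reduces the syzygy computation for $(X^{p^ea},Y^{p^ea},Z^{p^ea})$ in $R$ to one for $(X^{p^ea},Y^{p^ea},(X^n+Y^n)^{\lfloor p^ea/n\rfloor})$ in $k[X,Y]$, up to controllable correction terms depending on $p^ea \bmod n$. Since $X^n+Y^n$ splits over $k$ into a product of $n$ distinct linear forms, the scaling and continuity properties of $\delta$ identify the syzygy gap of the reduced triple with $np^e\cdot \delta(a/n,a/n,a/n)$ (up to the same corrections). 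Normalizing by $np^e$ and translating back via the presenting sequence of $\fpb{e}(\Sc)$ yields $(\ast)$.

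The principal obstacle is the precise bookkeeping in this reduction: one must rigorously control the correction terms arising when $p^ea$ is not divisible by $n$, and convert the $k[X,Y]$-syzygy gap into the degree of the line subbundle $\Lc$ on the Fermat curve. This is the technical core of Han's thesis \cite{handiss}, with complete treatments also in Monsky's \cite{monskyhan} and the alternative \cite{mason}.
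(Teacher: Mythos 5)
The paper does not prove this theorem; it is an attribution, with the proof deferred entirely to Han's thesis and Monsky's paper. Your proposal instead tries to build an actual argument out of the vector-bundle machinery, and the first step is correct and clean: Theorem \ref{planecurvehkformula}(1),(2) reduce the statement to the identity $\beta=\tfrac{n}{2}\,\delta\bigl(\tfrac{a}{n},\tfrac{a}{n},\tfrac{a}{n}\bigr)$, where $\beta$ is the normalized excess degree of the maximal destabilizing subbundle of the first non-semistable Frobenius pull-back (and $\beta=0$ in the strongly semistable case). That reformulation is a genuinely different and more geometric route than the sources the paper cites, and it is a useful way to see what the theorem ``really says.''

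The problem is that the identity $(\ast)$ \emph{is} the theorem, and your proof of it is a sketch whose technical core is again deferred to the very references the paper cites, so nothing has actually been proved. Concretely: (i) in the strongly semistable direction you assert that a pair $(s,u)$ at taxicab distance $<1$ ``would produce a line subbundle violating semistability'' --- this is the content of Lemma \ref{nonstablefrobenius} and requires constructing explicit global sections, which you do not do; (ii) in the non-semistable case you must show not only that such a section exists but that it has \emph{no zeros}, so that its degree computes $\Deg(\Lc)$ exactly --- in the paper's own logic (Theorem \ref{almarfilt}) this no-zeros claim is \emph{deduced from} the Han--Monsky formula via the Hilbert-Kunz multiplicity, so arguing in your direction risks circularity unless you supply an independent computation; (iii) the reduction of the syzygy module on $C$ to $\Syz_{k[X,Y]}(X^{ap^e},Y^{ap^e},(X^n+Y^n)^{\lfloor ap^e/n\rfloor})$ with ``controllable correction terms,'' and the identification of the resulting syzygy gap with $np^e\cdot\delta(\tfrac{a}{n},\tfrac{a}{n},\tfrac{a}{n})$, is not routine bookkeeping: $\delta$ is defined via a single linear form $(X+Y)^c$, whereas $X^n+Y^n$ is a product of $n$ distinct linear forms, and relating the two requires Monsky's full syzygy-gap theory for products of linear forms. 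As it stands the proposal is an honest reduction plus a pointer to the literature --- which is defensible, since that is also what the paper does, but it should not be presented as a proof of $(\ast)$.
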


\begin{proof}
The case $a=1$ is due to Han (cf. \cite[Theorem 2.30]{handiss}) and was generalized by Monsky (cf. \cite[Theorem 2.3]{irred}). 
\end{proof}

Combining Theorem \ref{planecurvehkformula}(1) and Theorem \ref{hkmfermat} one obtains the following numerical criterion for strong semistability.

\begin{cor}\label{deltastrongsemistable} Let $\gcd(p,n)=1$.
The bundle $\Syz_{V_+(X^n+Y^n+Z^n)}(X^a,Y^a,Z^a)$ is strongly semistable if and only if 
$\delta\left(\frac{a}{n},\frac{a}{n},\frac{a}{n}\right)=0.$
\end{cor}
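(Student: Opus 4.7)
The plan is to deduce this corollary as a direct consequence of the two theorems immediately preceding it, namely Theorem \ref{planecurvehkformula}(1) and Theorem \ref{hkmfermat}. First I would verify that the hypothesis $\gcd(p,n)=1$ ensures that the Fermat curve $C = V_+(X^n+Y^n+Z^n) \subset \Prim^2$ is smooth (by the Jacobian criterion applied to the partial derivatives $nX^{n-1}, nY^{n-1}, nZ^{n-1}$), so that the hypotheses of Theorem \ref{planecurvehkformula} are met for the $R_+$-primary ideal $I = (X^a, Y^a, Z^a)$ in the Fermat ring $R = k[X,Y,Z]/(X^n+Y^n+Z^n)$. It is also immediate that $I$ is $R_+$-primary, since $\sqrt{I}=R_+$ in the polynomial ring $k[X,Y,Z]$.

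The remaining step is to combine the two numerical expressions. By Theorem \ref{planecurvehkformula}(1), the syzygy bundle $\Sc := \Syz_C(X^a, Y^a, Z^a)$ is strongly semistable if and only if $\HKM(I) = \tfrac{3n}{4}a^2$. By Theorem \ref{hkmfermat},
$$\HKM(I) = \frac{3a^2 n}{4} + \frac{n^3}{4}\cdot \delta\left(\frac{a}{n},\frac{a}{n},\frac{a}{n}\right)^2.$$
Equating these, $\Sc$ is strongly semistable precisely when $\tfrac{n^3}{4}\,\delta(a/n,a/n,a/n)^2 = 0$, and since $n\geq 1$ this occurs if and only if $\delta(a/n,a/n,a/n) = 0$. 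There is no serious obstacle here: all the substantive work has been absorbed into the two cited theorems, and what remains is an algebraic simplification together with the verification of the smoothness hypothesis under $\gcd(p,n)=1$.
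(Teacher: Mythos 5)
Your proposal is correct and is exactly the paper's argument: the corollary is stated as the combination of Theorem \ref{planecurvehkformula}(1) with Theorem \ref{hkmfermat}, and the only content is the algebraic observation that $\frac{n^3}{4}\delta(\frac{a}{n},\frac{a}{n},\frac{a}{n})^2=0$ iff $\delta(\frac{a}{n},\frac{a}{n},\frac{a}{n})=0$. The smoothness check via the Jacobian criterion under $\gcd(p,n)=1$ is the right (and only) hypothesis verification needed.
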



\begin{exa}
\label{syzsquarestableexample}
In \cite[Remark 2]{miyaoka} Brenner asks whether the syzygy bundle $\Syz_C(X^2,Y^2,Z^2)$ is strongly semistable on the Fermat curve $C$ of degree 
$n$ in characteristics $p \equiv \pm 1 \!\! \mod n$. Using Corollary \ref{deltastrongsemistable} we are able to answer this question positively for 
the interesting cases $n \geq 3$. Since $2p^s \equiv \pm 2 \!\! \mod 2n$ for every $s \geq 0$, we see that the distance of $\frac{2p^s}{n}$ to the 
next odd integer is $\frac{n-2}{n}$ and $3 \frac{n-2}{n}= 3- \frac{6}{n} \geq 1$. Thus, $\delta(\frac{2}{n},\frac{2}{n},\frac{2}{n})=0$ and 
$\HKM((X^2,Y^2,Z^2))=\frac{12}{n}$, which is equivalent to the strong semistability of the syzygy bundle $\Syz_C(X^2,Y^2,Z^2)$. In the case $n=1$ we 
have $\Syz_C(X^2,Y^2,Z^2) \cong \Oc_{\Prim^1}(-3) \oplus \Oc_{\Prim^1}(-3)$ and this bundle is obviously strongly semistable. For $n=2$ the curve 
equation $X^2+Y^2+Z^2=0$ yields a non-trivial global section of total degree $2$. Since $\deg(\Syz_C(X^2,Y^2,Z^2)(2))=-4$, the bundle is not semistable.
\end{exa}

\section{The strongly semistable case}
\label{sec:periodicity}
We start by fixing the notations which will be used for the rest of this paper.

\begin{situation}\label{situation}
Let $k$ be an algebraically closed field of characteristic $p>0$ and let $R:=k[X,Y,Z]/(X^n+Y^n+Z^n)$ for $n\geq 1$. We denote the projective Fermat curve 
of degree $n$ by $C:=\Proj(R)$. We assume $\gcd(p,n)=1$, hence $C$ is a smooth curve. We will consider the ideal $I:=(X^a,Y^a,Z^a)$, $a\geq 1$, 
as well as its first syzygy module $\Syz_R(I)$ and the bundle $\Syz_C(I)$. 
\end{situation}

An important special case in the situation above is $a=1$, since $\Syz_C(X,Y,Z)$ is isomorphic to 
the restricted cotangent bundle $\Omega_{\Prim^2}|_C$ which can be seen by using the Euler sequence. Note that the restricted 
cotangent bundle $\Omega_{\Prim^2}|_C$ is (at least) semistable on every smooth plane curve of degree $\geq 2$ (cf. \cite[Corollary 3.5]{tri1}).

In this section we deal with the case where the syzygy bundle $\Syz_C(X^a,Y^a,Z^a)$ is strongly semistable. For this purpose we will use results of 
\cite{vraciu} where the authors study the minimal projective resolution of the quotients $R/\left(X^a,Y^a,Z^a\right)$ depending on $a$ and $n$.

We start with the special case where some Frobenius pull-back of $\Syz_C(X^a,Y^a,Z^a)$ becomes a direct sum of twisted copies of the structure sheaf. Hence we have an isomorphism
$$\Syz_C(X^{aq},Y^{aq},Z^{aq})\cong\Oc_C(l)^2$$
for some $q=p^e$ and some $l\in\Z$ due to the semistability of the pull-back. Note that this may happen only if $p=2$ or if $a$ is even as one sees by comparing the degrees of the bundles above. 

By \cite[Theorem 2.1]{claudia} the Hilbert-Kunz function is given by $\HKM(I^{[e_0]})\cdot p^{2(e-e_0)}$, where $e_0$ is (minimal) such that 
$R/I^{[e_0]}$ has finite projective dimension.

The minimal $e$ such that $\Syz_C(X^{ap^e},Y^{ap^e},Z^{ap^e}))$ is a direct sum of twisted copies of the structure sheaf can be computed with the following theorem.

\begin{thm}[Kustin, Rahmati, Vraciu]\label{finprojdim}
In Situation \ref{situation}, the quotient $R/I$ has finite projective dimension as an $R$-module if and only if one of the following conditions is satisfied

\begin{enumerate}
 \item $n|a$,
 \item $p=2$ and $n\leq a$,
 \item $p$ is odd and there exist positive integers $J$ and $e$ with $J$ odd such that
$$\left|Jp^e-\frac{a}{n}\right|<\begin{cases}3^{e-1} & \text{if }p=3,\\ \frac{p^e-1}{3} & \text{if }p^e\equiv 1\text{ mod }3,\\ \frac{p^e+1}{3} & \text{if }p^e\equiv 2\text{ mod }3.\end{cases}$$
\end{enumerate}
\end{thm}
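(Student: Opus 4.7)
The strategy is to convert the algebraic condition on projective dimension into a splitting property of the associated syzygy bundle $\Syz_C(I)$, and then to read off the resulting decomposition from Han's combinatorial characterization of the delta function.

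Since $R$ is a two-dimensional Cohen-Macaulay hypersurface and $I$ is $R_+$-primary, the depth of $R/I$ vanishes, so by Auslander-Buchsbaum the condition $\mathrm{pd}_R(R/I) < \infty$ is equivalent to $\mathrm{pd}_R(R/I) = 2$, i.e., to the rank-$2$ graded module $\Syz_R(I)$ being free. Because $R$ is normal and two-dimensional, freeness of the reflexive module $\Syz_R(I)$ is in turn equivalent to its sheafification $\Syz_C(I)$ splitting as a direct sum $\Oc_C(-b_1) \oplus \Oc_C(-b_2)$ on $C$. A degree count from the presenting sequence forces $b_1 + b_2 = 3an$, and any splitting compatible with the inherent symmetry of the generators $X^a, Y^a, Z^a$ must have $b_1 = b_2 = 3an/2 \in \Z$; the parity of $3an$ therefore accounts for the dichotomy between $p = 2$ (where Frobenius can be used to double $a$) and odd $p$.

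I would then combine this with Corollary \ref{deltastrongsemistable}: a splitting $\Syz_C(I) \cong \Oc_C(l)^{\oplus 2}$ forces strong semistability, hence $\delta(a/n, a/n, a/n) = 0$. Strong semistability alone is only a necessary condition, but for the Fermat curve and this monomial ideal it suffices by explicit matrix-factorization constructions. Applying Theorem \ref{hansthm} to the diagonal point $t = (a/n, a/n, a/n)$, the candidates for $u \in L_{\mathrm{odd}}$ closest to $p^s t$ have the form $(J, J, J)$ with $J$ odd, or perturbations of one coordinate by $\pm 1$; which of these is optimal depends on $p^s \bmod 3$ and on the nearest integer to $p^s a/n$. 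Writing $s = -e$ and carrying out the case analysis $p = 3$, $p^e \equiv 1 \bmod 3$, $p^e \equiv 2 \bmod 3$ produces precisely the three bounds $3^{e-1}$, $(p^e-1)/3$ and $(p^e+1)/3$ appearing in (3). Cases (1) and (2) are handled separately: for $n \mid a$ the Fermat relation $X^n + Y^n + Z^n = 0$ provides a tautological syzygy whose appropriate powers yield an explicit finite free resolution of $R/I$; for $p = 2$ with $n \leq a$, a sufficiently high Frobenius pull-back reduces to case (1) since the parity obstruction vanishes.

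The main obstacle is to bridge the gap between strong semistability (equivalent to $\delta = 0$) and an actual splitting of the bundle as $\Oc_C(l)^{\oplus 2}$: on a curve of positive genus, a strongly semistable rank-$2$ bundle need not be a sum of line bundles, so one must exploit the specific combinatorial structure of monomial ideals on Fermat hypersurfaces to produce matrix factorizations by hand. The passage from the distance inequality of Theorem \ref{hansthm} to the exact piecewise Diophantine conditions in (3), in particular the delicate case distinctions dictated by $p^e \bmod 3$, constitutes the technical heart of the Kustin-Rahmati-Vraciu argument, which I would invoke as a black box rather than redo from scratch.
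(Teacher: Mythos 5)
The paper does not reprove this statement at all: its ``proof'' is a citation of Kustin--Rahmati--Vraciu \cite[Theorem 6.3]{vraciu}, so any self-contained argument is necessarily a new route. Your route, however, breaks at its central reduction. You pass from ``$\mathrm{pd}_R(R/I)<\infty$'' to ``$\Syz_R(I)$ free'' to ``$\Syz_C(I)\cong\Oc_C(-b_1)\oplus\Oc_C(-b_2)$'' (all fine, except that the degree count gives $b_1+b_2=3a$, not $3an$), but then assert that symmetry forces $b_1=b_2$, hence strong semistability, hence $\delta(\tfrac{a}{n},\tfrac{a}{n},\tfrac{a}{n})=0$, and you propose to extract condition (3) from Han's Theorem \ref{hansthm}. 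Both links are false. Nothing forces a balanced splitting: the $S_3$-action on the generators preserves the destabilizing summand of an unbalanced splitting rather than excluding it, and the paper's own Example \ref{p3n7} exhibits $\Syz_R(X^{27},Y^{27},Z^{27})\cong R(-39)\oplus R(-42)$ for $p=3$, $n=7$ --- a free syzygy module (so $R/I$ has finite projective dimension, consistent with condition (3) for $J=1$, $e=1$) while $\delta(\tfrac{27}{7},\tfrac{27}{7},\tfrac{27}{7})=\tfrac{3}{7}\neq 0$ and the bundle is not even semistable. Conversely, your claim that strong semistability ``suffices'' for splitting is also wrong: for $p\equiv\pm 1\bmod 2n$ the bundle $\Omega_{\Prim^2}|_C=\Syz_C(X,Y,Z)$ is strongly semistable (Theorem \ref{p+-1}) yet never splits for $n\geq 2$, since $R/(X,Y,Z)=k$ has infinite projective dimension over the non-regular ring $R$. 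So finite projective dimension is neither implied by nor implies $\delta=0$, and Han's theorem is the wrong combinatorial device for producing the bounds $3^{e-1}$ and $(p^e\pm 1)/3$; those come out of the explicit matrix factorizations $\phi_{r,s}$ and the periodicity analysis in \cite{vraciu}.

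Beyond this, the remaining cases are hand-waved: for (2) you claim that finiteness of $\mathrm{pd}(R/I^{[q]})$ for large $q$ descends to $R/I$ ``since the parity obstruction vanishes,'' but descent of finite projective dimension along Frobenius is a nontrivial theorem (for complete intersections) and is not what ``parity'' gives you; and the ``technical heart'' of (3) is explicitly deferred to the source as a black box. The net effect is that the proposal proves nothing that the citation does not already provide, while the framing it adds is refuted by examples inside this very paper. The honest proof here is the one the paper gives: quote \cite[Theorem 6.3]{vraciu}.
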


\begin{proof}
See \cite[Theorem 6.3]{vraciu}.
\end{proof}

In \cite{li} the author uses this theorem to study the interaction between strong semistability of $\Syz_C(I)$, the projective dimesions of 
$R/I\qpot$ and the diagonal $\frob$-threshold of $I$.

In the case where $\Syz_C(X^a,Y^a,Z^a)$ is strongly semistable but no Frobenius pull-back of it splits as a direct sum of twisted copies of the structure 
sheaf, we have that all quotients $R/(X^{aq},Y^{aq},Z^{aq})$ have infinite projective dimension as $R$-module, because the first syzygy modules 
$\Syz_R(X^{aq},Y^{aq},Z^{aq})$ are not free. In this case we can use a result of Kustin, Rahmati and Vraciu to obtain a twisted Frobenius periodicity of 
$\Syz_C(X^a,Y^a,Z^a)$ in the following sense.

\begin{defi}
Let $S$ be a normal, standard-graded $k$-domain of dimension two, where $k$ is an algebraically closed field of characteristic $p>0$. Let $\Ec$ be a vector bundle 
over the smooth projective curve $Y:=\Proj(S)$. Assume there are $0\leq s<t\in\N$ such that the Frobenius pull-backs $\fpb{e}(\Ec)$ of $\Ec$ (and all their twists) 
are pairwise non-isomorphic for $0\leq e\leq t-1$ and $\fpb{t}(\Ec)\cong\fpb{s}(\Ec)(m)$ holds for some $m \in \Z$. We say that $\Ec$ admits a 
\textit{twisted $(s,t)$-Frobenius periodicity.} The bundle $\Ec$ admits a \textit{twisted Frobenius periodicity} if there are $0\leq s<t\in\N$ such that 
$\Ec$ admits a twisted $(s,t)$-Frobenius periodicity.
\end{defi}

To state the necessary result of Kustin, et al., we need one more definition.

\begin{defi}
Let $r$ and $s$ be positive integers with $r+s=n$. Then we define 
$$\phi_{r,s}:=\begin{pmatrix}0 & Z^r & -Y^r & X^s\\ -Z^r & 0 & X^r & Y^s\\ Y^r & -X^r & 0 & Z^s\\ -X^s & -Y^s & -Z^s & 0\end{pmatrix}.$$
\end{defi}

\begin{thm}[Kustin, Rahmati, Vraciu]\label{inftyfree}
In Situation \ref{situation}, let $a=\theta\cdot n+r$ with $\theta\in\N$ and $r\in\{1,\ldots,n-1\}$. Assume that $Q:=R/I$ 
has infinite projective dimension. If $\theta=2\cdot\eta-1$, then the homogenous minimal free resolution of $Q$ is given by
$$\xymatrix{
\ldots \ar[r]^{\phi_{r,n-r}} & F_1(-n) \ar[r]^{\phi_{n-r,r}} & F_2 \ar[r]^{\phi_{r,n-r}} & F_1\ar[r] & R(-a)^3\ar[r] & R
}$$
with free graded modules 
\begin{align*}
F_1 & :=R(-3\eta n+n-r)^3 \oplus R(-3\eta n+2n-3r),\\
F_2 & :=R(-3\eta n+n-2r)^3 \oplus R(-3\eta n).
\end{align*}
If $\theta=2\cdot\eta$, then the homogenous minimal free resolution of $Q$ is given by
$$\xymatrix{
\ldots \ar[r]^{\phi_{n-r,r}} & F_1(-n) \ar[r]^{\phi_{r,n-r}} & F_2 \ar[r]^{\phi_{n-r,r}} & F_1 \ar[r] & R(-a)^3\ar[r] & R
},$$
where the first three free graded modules are defined as
\begin{align*}
F_1 & :=R(-3\eta n-2r)^3 \oplus R(-3\eta n-n),\\
F_2 & :=R(-3\eta n-n-r)^3 \oplus R(-3\eta n-3r)
\end{align*}
\end{thm}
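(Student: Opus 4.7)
The plan is to exploit that $R=k[X,Y,Z]/(F)$ with $F:=X^n+Y^n+Z^n$ is a hypersurface, so by Eisenbud's theorem on matrix factorizations the minimal free resolution of any finitely generated $R$-module of infinite projective dimension is eventually $2$-periodic and encoded by a matrix factorization of $F$ over $S:=k[X,Y,Z]$. The matrices $\phi_{r,n-r}$ and $\phi_{n-r,r}$ will turn out to provide such a factorization.

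First I would verify that $(\phi_{r,n-r},\phi_{n-r,r})$ is a matrix factorization of $F$. A direct entrywise computation shows that $\phi_{r,n-r}\cdot\phi_{n-r,r}=\phi_{n-r,r}\cdot\phi_{r,n-r}=-F\cdot\mathrm{Id}_4$ over $S$: each diagonal entry collapses to $-(X^n+Y^n+Z^n)$, while the off-diagonal entries vanish because the skew-symmetric pairs cancel. Reducing modulo $F$ then produces, with appropriate graded twists, the $2$-periodic exact complex
\[
\cdots\xrightarrow{\phi_{r,n-r}}F_1(-n)\xrightarrow{\phi_{n-r,r}}F_2\xrightarrow{\phi_{r,n-r}}F_1\xrightarrow{\phi_{n-r,r}}\cdots
\]
of free $R$-modules, which after a shift is the tail of the sought resolution.

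Next I would compute the beginning of the minimal free resolution of $Q=R/I$ by hand. The three Koszul syzygies of $(X^a,Y^a,Z^a)$ are of degree $2a$, but iterated substitutions of the Fermat relation $X^n=-(Y^n+Z^n)$ into $X^a,Y^a,Z^a$ produce an additional lower-degree syzygy whose degree depends on $r$ and on the parity of $\theta$. Writing $a=\theta n+r$, each application of the Fermat relation effectively swaps the roles of $r$ and $n-r$, so the combinatorics of the extra generator alternates according to whether $\theta$ is even or odd; this is precisely where the case distinction $\theta=2\eta$ versus $\theta=2\eta-1$ enters. A careful bookkeeping shows that the first syzygy module is minimally generated in the degrees giving the stated $F_1$ (three Koszul generators plus one Fermat generator) and the second syzygy in those giving $F_2$. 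By Eisenbud periodicity these are the only possible shapes, so the map $F_2\to F_1$ must be $\phi_{r,n-r}$ or $\phi_{n-r,r}$ according to the parity of $\theta$, and all subsequent differentials follow by $2$-periodicity together with a twist by $-n$ after each two steps.

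The main obstacle will be step two: identifying the minimal generators of the first two syzygy modules with the correct graded twists and matching the parity of $\theta$ to the correct choice of starting matrix. In particular one must verify that no unexpected cancellations occur after passing from $S$ to $R$ so that the generator count is genuinely $4$ (rather than $3$) in each $F_i$, and that the signs and exponents in $\phi_{r,n-r}$ versus $\phi_{n-r,r}$ are the ones produced by the Fermat reduction in each parity case. Once these minimal presentations have been pinned down, the rest of the resolution is forced by the matrix factorization structure verified in step one.
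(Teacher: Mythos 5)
Your high-level architecture is the right one, and it is essentially how Kustin, Rahmati and Vraciu organize their own argument; note that the paper under review offers no proof of this statement at all, only the citation to \cite{vraciu}, so there is nothing internal to compare against. Your step one is correct and easy to confirm: for $r+s=n$ a direct computation gives $\phi_{r,s}\,\phi_{s,r}=\phi_{s,r}\,\phi_{r,s}=-(X^n+Y^n+Z^n)\cdot\mathrm{Id}_4$ (the diagonal entries each collapse to $-X^n-Y^n-Z^n$ and the off-diagonal entries cancel in pairs), so the pair is a matrix factorization of the Fermat polynomial. Since $\Syz_R(X^a,Y^a,Z^a)$ is a second syzygy of $Q$ over the two-dimensional Cohen--Macaulay ring $R$, it is maximal Cohen--Macaulay, and Eisenbud's theorem then forces the tail of the resolution to be the stated $2$-periodic complex \emph{once} one knows that this module is minimally presented by $\phi_{r,n-r}$ resp.\ $\phi_{n-r,r}$ in the stated degrees.

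That last clause is the genuine gap: your step two is where all of the content of the theorem lives, and the one concrete assertion you make there is false. You claim the four minimal generators of the first syzygy module are ``three Koszul generators plus one Fermat generator.'' The Koszul syzygies of $(X^a,Y^a,Z^a)$ sit in degree $2a=2\theta n+2r$, whereas the theorem places the three generators in degree $3\eta n+2r$ (for $\theta=2\eta$) resp.\ $3\eta n-n+r$ (for $\theta=2\eta-1$), which is strictly smaller than $2a$ for every $\theta\geq 1$. So outside the base case $a<n$ the minimal generators are not Koszul syzygies at all; they are lower-degree syzygies manufactured from the relation $X^n+Y^n+Z^n=0$, and exhibiting them explicitly, proving that they generate (i.e.\ exactness at $F_1$, e.g.\ via a Hilbert-series or rank count), showing that no free summand splits off, and identifying the resulting presentation matrix as exactly $\phi_{r,n-r}$ or $\phi_{n-r,r}$ according to the parity of $\theta$ is the hard combinatorial core of \cite{vraciu} --- it is also where the hypothesis that $Q$ has infinite projective dimension (Theorem \ref{finprojdim}) must be consumed, since for finite projective dimension the syzygy module is free of rank $2$ and the whole picture is different. ``A careful bookkeeping shows'' does not discharge any of this; as written, your argument proves only the conditional statement that \emph{if} the first syzygy module is presented by one of the $\phi$'s with the stated twists, \emph{then} the periodic continuation follows.
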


\begin{proof}
The statement follows from \cite[Theorem 3.5]{vraciu} combined with \cite[Theorems 5.14 and 6.1]{vraciu}.
\end{proof}

\begin{cor}\label{inftysyz}
Under the hypothesis of Theorem \ref{inftyfree}, we have
$$\begin{aligned}
\Syz_R\left(X^a,Y^a,Z^a\right)(m) & \cong \begin{cases}\coKer(\phi_{n-r,r}) & \text{if }\theta\text{ is even,}\\ \coKer(\phi_{r,n-r}) & \text{if }\theta\text{ is odd,}\end{cases}\\
    & \cong \begin{cases}\Syz_R(X^r,Y^r,Z^r) & \text{if }\theta\text{ is even,}\\ \Syz_R(X^{n-r},Y^{n-r},Z^{n-r}) & \text{if }\theta\text{ is odd}\end{cases}
\end{aligned}$$
for some $m\in\Z$.
\end{cor}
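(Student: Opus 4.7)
The plan is to extract both claimed isomorphisms directly from the minimal free resolutions given by Theorem \ref{inftyfree}, applied once to the ideal $I=(X^a,Y^a,Z^a)$ itself and once to a ``reduced'' ideal whose exponent lies in $\{1,\ldots,n-1\}$.

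For the first line, I would use that $\Syz_R(X^a,Y^a,Z^a)$ is by definition the kernel of the surjection $R(-a)^3 \twoheadrightarrow (X^a,Y^a,Z^a)$. Exactness of the resolution of Theorem \ref{inftyfree} at the term $R(-a)^3$ identifies this kernel with the image of the differential $F_1 \to R(-a)^3$, and exactness at $F_1$ identifies that image, as a graded $R$-module and up to an overall grading shift, with the cokernel of the following differential $F_2 \to F_1$. Reading off Theorem \ref{inftyfree} this differential is $\phi_{n-r,r}$ when $\theta$ is even and $\phi_{r,n-r}$ when $\theta$ is odd, which yields the first displayed isomorphism after absorbing the shift into the twist $(m)$.

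For the second line I would apply Theorem \ref{inftyfree} a second time, now to the reduced ideals $(X^r,Y^r,Z^r)$ (if $\theta$ is even) and $(X^{n-r},Y^{n-r},Z^{n-r})$ (if $\theta$ is odd). Writing the new exponent as $r' = r$ respectively $r' = n-r$, we have $0 < r' < n$, so the new parameter $\theta'$ equals $0$ (even), and the even-case formulas of the theorem apply. The same exactness argument then identifies $\Syz_R(X^{r'},Y^{r'},Z^{r'})$, up to twist, with $\coKer(\phi_{n-r',r'})$; specialising $r'=r$ produces $\coKer(\phi_{n-r,r})$, and specialising $r'=n-r$ produces $\coKer(\phi_{r,n-r})$. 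These match exactly the two cokernels obtained in the first step, giving the desired isomorphism.

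The main obstacle is verifying the hypothesis of Theorem \ref{inftyfree} for the reduced ideals, namely that $R/(X^{r'},Y^{r'},Z^{r'})$ also has infinite projective dimension for any $r' \in \{1,\ldots,n-1\}$. I would handle this by direct inspection of Theorem \ref{finprojdim}: condition~(1) fails because $n \nmid r'$, condition~(2) fails because $r' < n$, and condition~(3) fails because $Jp^e \geq p \geq 3$ while $r'/n < 1$ forces $|Jp^e - r'/n|$ to exceed the stated bound for every choice of positive integers $J,e$. Once this is checked, all that remains is routine bookkeeping of grading shifts, which are absorbed into the anonymous twist $(m)$ in the statement.
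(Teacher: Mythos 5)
Your argument is correct and is essentially the paper's own proof, which simply reads both isomorphisms off the minimal free resolution of Theorem \ref{inftyfree}, applied once to the exponent $a$ and once to the reduced exponents $r$ and $n-r$ (``the case $a=r$''). Your explicit verification via Theorem \ref{finprojdim} that $R/(X^{r'},Y^{r'},Z^{r'})$ has infinite projective dimension for $1\leq r'\leq n-1$ fills in a detail the paper leaves implicit, but it does not change the route.
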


\begin{proof}
The first isomorphism is clear from the free resolution. The second isomorphism follows from the free resolution by considering the case $a=r$.
\end{proof}

At this point it is not clear whether the modules $M_r=\Syz_R(X^r,Y^r,Z^r)$ for $1\leq r\leq n-1$ are pairwise non-isomorphic or not. A criterion 
to decide this is given by the Hilbert-series, which can be computed with the help of the next theorem, which is a slight but useful improvement 
of \cite[Lemma 1.1]{miyaoka}. Geometrically spoken, it considers a smooth projective curve 
$$D:= V_+(Z^n -F(X,Y)) \subset \Prim^2= \Proj k[X,Y,Z],$$
where $F(X,Y) \in k[X,Y]$ denotes a homogeneous polynomial of degree $n$, and relates the sheaves $\Syz_D(X^{a},Y^{b},Z^{c})$ to the sheaves 
$\Syz_D(X^a,Y^{b},F(X,Y)^i)$ which come from $\Prim^1$ via the Noetherian normalization $D \ra \Prim^1=\Proj k[X,Y]$. We will use the following result several 
times in the sequel of this paper. 

\begin{thm}\label{ses}
Let $k$ be a field, $S:=k[X,Y,Z]/(Z^n-F(X,Y))$ and $F\in k[X,Y]$ homogeneous of degree $n\geq 2$. Let $a, b, c\geq 1$ and write $c=n\cdot q +r$ 
with $0\leq r\leq n-1$ and $q\in\N$. For all $s\in\Z$ we have a short exact sequence
\begin{align*}
0 & \lra \Syz_S(X^a,Y^b,Z^{c+n-2r})(s-r)\\
 & \stackrel{\psi}{\lra} \Syz_S(X^a,Y^b,F^q)(s-r)\dirsum\Syz_S(X^a,Y^b,F^{q+1})(s)\\
 & \stackrel{\phi}{\lra} \Syz_S(X^a,Y^b,Z^c)(s)\lra 0,
\end{align*}
where the maps are defined via 
\begin{align*}
\psi(h_1,h_2,h_3):= & ((Z^{n-r}\cdot h_1,h_2,h_3),(-h_1,-Z^r\cdot h_2,-Z^r\cdot h_3))\\
\phi((f_1,f_2,f_3),(g_1,g_2,g_3)):= & (f_1+Z^{n-r}\cdot g_1,Z^r\cdot f_2+g_2,Z^r\cdot f_3+g_3). 
\end{align*}
\end{thm}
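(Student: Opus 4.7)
The plan is to verify the five standard properties of a short exact sequence of modules: well-definedness of $\psi$ and $\phi$, the composition $\phi\circ\psi = 0$, injectivity of $\psi$, surjectivity of $\phi$, and exactness in the middle. All of these rest on the single structural identity $Z^n = F$ in $S$, together with the arithmetic $c + n - 2r = n(q+1) - r$, which translates into the key power identities $Z^{c+n-2r} = Z^{n-r}F^q$ and $Z^c = Z^r F^q$ in $S$.

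The first three properties are essentially computational. For well-definedness, I would check that each output triple, paired coordinate-wise with the appropriate generating tuple, produces a zero sum in $S$; in the case of $\psi$ this reduces to the source syzygy relation after substituting the power identities above, and in the case of $\phi$ to a cancellation between the two given syzygy relations on $(f_1,f_2,f_3)$ and $(g_1,g_2,g_3)$. The identity $\phi\circ\psi = 0$ is then visible from the defining formulas by term-by-term cancellation. Injectivity of $\psi$ follows because $(h_1, h_2, h_3)$ can be recovered from undecorated entries of $\psi(h)$, using that $Z$ is a nonzero-divisor in $S$.

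The main nontrivial step is the surjectivity of $\phi$, from which exactness at the middle is a short additional check. Here I would exploit that $S$ is a free $k[X,Y]$-module of rank $n$ with basis $\{1, Z, Z^2, \ldots, Z^{n-1}\}$. Given $(H_1, H_2, H_3) \in \Syz_S(X^a, Y^b, Z^c)$, decompose each $H_i = \sum_{k=0}^{n-1} H_i^{(k)} Z^k$ with $H_i^{(k)} \in k[X,Y]$. Expanding the defining relation $H_1 X^a + H_2 Y^b + H_3 Z^c = 0$ in this basis and reducing the powers $Z^{k+c}$ via $Z^n = F$ (which takes effect precisely when $k \geq n-r$) yields a family of $n$ identities over $k[X,Y]$, which split naturally into $n-r$ syzygies of $(X^a, Y^b, F^q)$ and $r$ syzygies of $(X^a, Y^b, F^{q+1})$. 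Reassembling each family via the basis of $S$ produces the desired $(f_1,f_2,f_3) \in \Syz_S(X^a, Y^b, F^q)$ and $(g_1,g_2,g_3) \in \Syz_S(X^a, Y^b, F^{q+1})$ mapping to $(H_1, H_2, H_3)$ under $\phi$. Exactness at the middle is then established by, given $((f,g)) \in \ker\phi$, reading off a unique $(h_1, h_2, h_3)$ from the three kernel equations and verifying that $h$ lies in $\Syz_S(X^a, Y^b, Z^{c+n-2r})$ via a brief calculation combining the two syzygy hypotheses. The main obstacle will be the bookkeeping in this surjectivity step: one must correctly distribute the $Z$-coefficients of each $H_i$ between the two direct summands so that each reassembled piece is a genuine syzygy and so that the formulas defining $\phi$ are reproduced exactly.
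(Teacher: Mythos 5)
Your strategy is correct, and for the only nontrivial point --- surjectivity of $\phi$ --- it is essentially the argument the paper outsources: the paper's own proof merely declares injectivity and middle exactness straightforward and cites \cite[Lemma 2.1]{holgeralmar} for surjectivity, and that argument rests on the same decomposition $S=\bigoplus_{i=0}^{n-1}k[X,Y]\,Z^i$ you propose. Your bookkeeping is right: reducing $Z^{k+c}=F^qZ^{k+r}$ for $k<n-r$ and $Z^{k+c}=F^{q+1}Z^{k+r-n}$ for $k\ge n-r$ and comparing coefficients of $Z^j$ yields $r$ syzygies of $(X^a,Y^b,F^{q+1})$ (for $0\le j\le r-1$) and $n-r$ syzygies of $(X^a,Y^b,F^q)$ (for $r\le j\le n-1$), which reassemble to a preimage under $\phi$; the injectivity and middle-exactness checks are as easy as you say.

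One concrete warning before you execute the plan: with the pairing convention fixed by the presenting sequence in Section 1 (the $i$-th entry of a syzygy multiplies the $i$-th listed generator), the formulas for $\psi$ and $\phi$ as printed are not well defined --- already the degree of $Z^{n-r}h_1$ exceeds by $n-r$ what the first slot of $\Syz_S(X^a,Y^b,F^q)(s-r)$ allows. The $Z$-powers belong on the complementary entries: $\psi(h_1,h_2,h_3)=((h_1,h_2,Z^{n-r}h_3),(-Z^rh_1,-Z^rh_2,-h_3))$ and $\phi((f_1,f_2,f_3),(g_1,g_2,g_3))=(Z^rf_1+g_1,Z^rf_2+g_2,f_3+Z^{n-r}g_3)$; equivalently, the printed formulas are the correct ones for the reversed pairing in which the first entry multiplies $Z^c$ (resp.\ $F^q$, $F^{q+1}$, $Z^{c+n-2r}$) and the third multiplies $X^a$. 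So your claim that well-definedness of $\psi$ ``reduces to the source syzygy relation after substituting the power identities'' holds only after this correction; applied verbatim to the printed maps the check fails, and your surjectivity computation produces a preimage for the corrected $\phi$, not the printed one.
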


\begin{proof}
The injectivity of $\psi$ is clear and the exactness at the middle spot is straightforward. The proof that $\phi$ is surjective can be found 
in \cite[Lemma 2.1]{holgeralmar}. See also \cite[Chapter 4]{drdaniel} for a detailed proof and a generalization.
\end{proof}

\begin{thm}\label{HilbSer} The notations are the same as in Theorem \ref{ses}. For $l\in\N$ we use the abbreviation $\Sc_l:=\Syz_S(X^a,Y^b,Z^l)$. 
Then the Hilbert-series of $\Sc_c=\Syz_S(X^a,Y^b,Z^c)$ is given by
\begin{equation*}
\lK_{\Sc_c}(t) = \frac{(t^{r}-t^{n})\cdot \lK_{\Sc_{c-r}}(t)+(1-t^{r})\lK_{\Sc_{c+n-r}}(t)}{1-t^{n}}.
\end{equation*}
\end{thm}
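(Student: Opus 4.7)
The strategy is to apply Theorem \ref{ses} twice, once with the given $c$ and once with $c$ replaced by $c+n-2r$, and then eliminate the unwanted term by linear algebra on Hilbert series.

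First, observe that in $S$ we have $Z^n = F(X,Y)$, hence $F^q$ and $Z^{nq}$ coincide as elements of $S$. Consequently the middle terms of the exact sequence in Theorem \ref{ses} may be rewritten as
$$\Syz_S(X^a,Y^b,F^q) = \Sc_{nq} = \Sc_{c-r}, \qquad \Syz_S(X^a,Y^b,F^{q+1}) = \Sc_{n(q+1)} = \Sc_{c+n-r}.$$
Applied with shift $s=0$ and using the standard rule $\lK_{M(d)}(t) = t^{-d} \lK_M(t)$, the short exact sequence from Theorem \ref{ses} yields after taking Hilbert series the identity
$$\lK_{\Sc_c}(t) \;=\; t^{r}\,\lK_{\Sc_{c-r}}(t) + \lK_{\Sc_{c+n-r}}(t) - t^{r}\,\lK_{\Sc_{c+n-2r}}(t). \qquad (\ast)$$

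To eliminate $\lK_{\Sc_{c+n-2r}}$, write $c'' := c+n-2r = nq + (n-r)$; assuming $r \neq 0$ (the case $r=0$ being trivial, since then $(\ast)$ already gives the claimed formula), the division $c'' = n q'' + r''$ yields $q'' = q$ and $r'' = n-r$. Applying Theorem \ref{ses} to $c''$ in place of $c$ (with shift $s$), the left-hand term becomes $\Syz_S(X^a,Y^b,Z^{c''+n-2r''}) = \Sc_c$, while the middle terms are $\Sc_{c-r}$ and $\Sc_{c+n-r}$ as before. Passing again to Hilbert series gives
$$\lK_{\Sc_{c+n-2r}}(t) \;=\; t^{\,n-r}\,\lK_{\Sc_{c-r}}(t) + \lK_{\Sc_{c+n-r}}(t) - t^{\,n-r}\,\lK_{\Sc_c}(t). \qquad (\ast\ast)$$

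Substituting $(\ast\ast)$ into $(\ast)$, the cross terms produce $-t^{n}\lK_{\Sc_{c-r}}(t) - t^{r}\lK_{\Sc_{c+n-r}}(t) + t^{n}\lK_{\Sc_c}(t)$, which after collecting yields $(1-t^n)\lK_{\Sc_c}(t) = (t^r - t^n)\lK_{\Sc_{c-r}}(t) + (1-t^r)\lK_{\Sc_{c+n-r}}(t)$, i.e.\ the desired formula. The main conceptual step is the recognition that $F^q = Z^{nq}$ inside $S$, which turns the intermediate syzygy modules coming from $\Prim^1$ into elements of the single family $\{\Sc_l\}_{l\in\N}$; the only real obstacle is a careful bookkeeping of graded shifts so as to apply $\lK_{M(d)}(t)=t^{-d}\lK_M(t)$ consistently in both applications of Theorem \ref{ses}.
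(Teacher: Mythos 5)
Your proposal is correct and follows essentially the same route as the paper: apply Theorem \ref{ses} to $c$ and to $c'=c+n-2r$ (noting $c'+n-2r'=c$ with $r'=n-r$), identify $F^q=Z^{nq}$ and $F^{q+1}=Z^{n(q+1)}$ in $S$ so the middle terms lie in the family $\{\Sc_l\}$, and eliminate $\lK_{\Sc_{c'}}$ between the two resulting Hilbert-series identities. Your explicit treatment of the degenerate case $r=0$ and of the graded shifts is a slightly more careful bookkeeping of the same argument.
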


\begin{proof}
 Let $c':=c+n-2r=nq+n-r$ and $r':=n-r$. Since $c'+n-2r'=c$, Theorem \ref{ses} yields
\begin{align*}
 \lK_{\Sc_c}(t) & = t^{r}\lK_{\Sc_{c-r}}(t)+\lK_{\Sc_{c+n-r}}(t)-t^{r}\lK_{\Sc_{c'}}(t)\quad\text{and}\\
 \lK_{\Sc_{c'}}(t) & = t^{r'}\lK_{\Sc_{c-r}}(t)+\lK_{\Sc_{c+n-r}}(t)-t^{r'}\lK_{\Sc_c}(t).
\end{align*}
Substituting $\lK_{\Sc_{c'}}(t)$ in the first formula and solving for $\lK_{\Sc_c}(t)$ gives the result.
\end{proof}

Turning back to the question of computing the Hilbert-series of the $R$-modules $M_r=\Syz_R(X^r,Y^r,Z^r),$ $1\leq r\leq n-1,$ we obtain via 
Theorem \ref{HilbSer}
\begin{align}\label{hilbsermr}
\nonumber \lK_{M_r}(t) &= \frac{(t^r-t^n)\cdot \lK_{\Syz_R\left(1,Y^r,Z^r\right)}(t)+(1-t^r)\lK_{\Syz_R\left(-Y^n-Z^n,Y^r,Z^r\right)}(t)}{1-t^n}\\
\nonumber &= \frac{2t^r(t^r-t^n)+(1-t^r)(t^n+t^{2r})}{(1-t)^3}\\
&=\frac{t^n+3t^{2r}-3t^{n+r}-t^{3r}}{(1-t)^3}.
\end{align}
Since the Hilbert-series of $M_r$ and $M_s$ for $1\leq r<s\leq n-1$ are not multiples of each other, the modules $(M_r)_{1\leq r\leq n-1}$ are pairwise 
non-isomorphic.

Now we are able to prove the following.

\begin{thm}\label{frobpernontri}
Assume we are in Situation \ref{situation}. The bundle $\Syz_C(X^a,Y^a,Z^a)$ admits a twisted Frobenius periodicity if and only if 
$\delta\left(\tfrac{a}{n},\tfrac{a}{n},\tfrac{a}{n}\right)=0.$ Moreover, the length of this periodicity is bounded from above by the order of $p$ in $\Z/(2n)$.  
\end{thm}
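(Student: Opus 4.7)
The plan is to combine Corollary \ref{deltastrongsemistable} with Theorem \ref{finprojdim}, Corollary \ref{inftysyz} and the Hilbert-series computation \eqref{hilbsermr}. Since by Corollary \ref{deltastrongsemistable} the condition $\delta(a/n,a/n,a/n)=0$ is equivalent to strong semistability of $\Sc:=\Syz_C(X^a,Y^a,Z^a)$, it suffices to prove that $\Sc$ admits a twisted Frobenius periodicity if and only if $\Sc$ is strongly semistable, and to bound the length of this periodicity by $d:=\mathrm{ord}_{\Z/(2n)}(p)$. Write $I:=(X^a,Y^a,Z^a)$ throughout.

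For the ``only if'' direction I would argue by contradiction: assume $\fpb{t}(\Sc)\cong\fpb{s}(\Sc)(m)$ for some $0\leq s<t$ and $m\in\Z$, but that $\Sc$ is not strongly semistable. Iterating Frobenius gives $\fpb{t+k}(\Sc)\cong\fpb{s+k}(\Sc)(p^km)$ for all $k\geq 0$, so after replacing $(s,t)$ by $(s+k,t+k)$ I may assume that $\fpb{s}(\Sc)$ and $\fpb{t}(\Sc)$ both lie past the stabilization index of the strong Harder-Narasimhan filtration of $\Sc$, with HN filtration $0\to\Lc_s\to\fpb{s}(\Sc)\to\shM_s\to 0$. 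Computing the degree of the maximal destabilizing subbundle of $\fpb{t}(\Sc)$ in two ways — via $\fpb{t-s}$ applied to the HN filtration at stage $s$ (giving degree $p^{t-s}\Deg(\Lc_s)$) and via the twist relation (giving degree $\Deg(\Lc_s)+mn$) — yields $(p^{t-s}-1)\Deg(\Lc_s)=mn$. The analogous identity for the quotient produces $(p^{t-s}-1)\Deg(\shM_s)=mn$. Subtracting forces $\Deg(\Lc_s)=\Deg(\shM_s)$, contradicting the strict HN inequality.

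For the ``if'' direction assume $\Sc$ is strongly semistable. If some $R/I^{[p^{e_0}]}$ has finite projective dimension, then $\Syz_R(I^{[p^{e_0}]})$ is free by Hilbert's syzygy theorem, so $\fpb{e_0}(\Sc)\cong\Oc_C(-\alpha)\oplus\Oc_C(-\beta)$; strong semistability (comparing the slopes of the summands with the total slope on $C$) forces $\alpha=\beta$, hence $\fpb{e_0}(\Sc)\cong\Oc_C(-\alpha)^2$. One further Frobenius pull-back yields $\fpb{e_0+1}(\Sc)\cong\Oc_C(-\alpha p)^2\cong\fpb{e_0}(\Sc)(-\alpha(p-1))$, producing a twisted periodicity of length $1$, which is trivially bounded by $d$. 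Otherwise no $R/I^{[p^e]}$ has finite projective dimension; Corollary \ref{inftysyz} then identifies $\fpb{e}(\Sc)=\Syz_C(X^{ap^e},Y^{ap^e},Z^{ap^e})$, up to twist, with $\Syz_C(X^{r_e'},Y^{r_e'},Z^{r_e'})$ for some $r_e'\in\{1,\ldots,n-1\}$ determined by $ap^e\bmod 2n$ (via the residue $r_e$ and the parity of the quotient $\theta_e$). Since $p^d\equiv 1\pmod{2n}$, the sequence $(r_e')_{e\geq 0}$ is immediately periodic with period dividing $d$, so $\fpb{e+d}(\Sc)\cong\fpb{e}(\Sc)$ up to twist for every $e\geq 0$. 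The Hilbert-series formula \eqref{hilbsermr} shows that the modules $M_r=\Syz_R(X^r,Y^r,Z^r)$ for distinct $r\in\{1,\ldots,n-1\}$ are pairwise non-isomorphic up to shift (comparison of lowest-degree terms of the numerators rules out any $t^m$-proportionality), so the minimal period $t-s$ arising from the $(r_e')$-sequence is at most $d$ and realises the desired twisted $(s,t)$-periodicity.

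The main obstacle I expect is the bookkeeping in the ``if'' direction: extracting the precise dependence of Corollary \ref{inftysyz}'s output on the residue of $ap^e$ modulo $2n$ (where the parity of $\theta_e$, not just the residue modulo $n$, is decisive), and verifying via \eqref{hilbsermr} that the minimal period obtained on the module side really produces a twisted $(s,t)$-Frobenius periodicity in the strict sense of the definition, with pairwise non-isomorphic pull-backs on $[0,t-1]$ and $t-s\leq d$.
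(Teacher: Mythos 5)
Your proposal is correct and follows essentially the same route as the paper: the equivalence with strong semistability via Corollary \ref{deltastrongsemistable}, and the periodicity via Corollary \ref{inftysyz} together with the observation that the twist-isomorphism class of $\Syz_R(X^{ap^e},Y^{ap^e},Z^{ap^e})$ depends only on $ap^e$ modulo $2n$, whence the bound by the order of $p$ in $\Z/(2n)$; your extra details (the degree argument on the strong HN-filtration for the ``only if'' direction and the split case $\fpb{e_0}(\Sc)\cong\Oc_C(-\alpha)^2$) fill in steps the paper leaves implicit. One small correction: the freeness of $\Syz_R(I^{[p^{e_0}]})$ in the finite projective dimension case follows from the Auslander--Buchsbaum formula (which forces $\operatorname{pd}=2$ over the two-dimensional hypersurface ring $R$), not from Hilbert's syzygy theorem, which applies to the polynomial ring.
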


\begin{proof}
This follows from Corollary \ref{deltastrongsemistable} and because the isomorphism class of the module 
$\Syz_R(X^{aq},Y^{aq},Z^{aq})$ depends on $r\equiv aq\text{ }(n)$ and the parity of $\tfrac{aq-r}{n}$.
\end{proof}

\begin{rem}
One should mention that one already knew that strongly semistable bundles admit a twisted Frobenius periodicity. This is because the coefficients 
of the equation $X^n+Y^n+Z^n=0$ lie in a finite field and therefore all moduli spaces are finite dimensional varieties over $\F_p$. Hence, the number of 
$\F_p$-rational points gives an upper bound for the length of the periodicity but it's very rough and there is no hint how to compute it explicitly.
\end{rem}

The next example shows that the upper bound for the length of the periodicity from Theorem \ref{frobpernontri} is the best possible.

\begin{exa}\label{fermatexa4}
Let $p=37$ and $n=14$. Then $p^e=14\cdot \theta +r$ with 
$$(\theta,r)=\begin{cases}(\text{even},9) & \text{if }e\equiv 1\text{ }(3),\\ (\text{odd},11) & \text{if }e\equiv 2\text{ }(3),\\ (\text{even},1) & \text{if }e\equiv 0\text{ }(3).\end{cases}$$
From this computation it is easy to see that $\delta(\tfrac{1}{14},\tfrac{1}{14},\tfrac{1}{14})=0$, hence $\Omega_{\Prim^2}|_C$ is strongly semistable and 
we get a twisted $(0,3)$-Frobenius periodicity 
$$\fpb{3}(\Omega_{\Prim^2}|_C)\cong\Omega_{\Prim^2}|_C\left(-\tfrac{3}{2}\cdot (q-1)\right).$$
\end{exa}

Theorem \ref{frobpernontri} enables us to compute the Hilbert-Kunz function of $(X^a,Y^a,Z^a)$, if the bundle $\Syz_C(X^a,Y^a,Z^a)$ is strongly semistable 
and none of its Frobenius pull-backs split as a direct sum of twisted copies of the structure sheaf.

\begin{thm}\label{hkfsss}
Assume we are in Situation \ref{situation} and that $\Syz_C(I)$ is strongly semistable and none of its Frobenius pull-backs are a direct sum of twisted copies 
of the structure sheaf. Let $aq=ap^e=n\theta+r$ with $\theta\in\N$ and $0\leq r<n$. Then 
$$\HKF(I,q)= \left\{\begin{aligned} & \frac{3a^2n}{4}\cdot q^2-\frac{3n}{4}r^2+r^3 && \text{if }\theta\text{ is even,}\\ & \frac{3a^2n}{4}\cdot q^2-\frac{3n}{4}(n-r)^2+(n-r)^3 && \text{if }\theta\text{ is odd.}\end{aligned}\right.$$
\end{thm}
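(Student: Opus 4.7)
The plan is to compute $\HKF(I,q) = \Dim_k R/I^{[q]}$ by evaluating the Hilbert series $\lK_{R/I^{[q]}}(t)$ at $t=1$. Since $\Syz_C(I)$ is strongly semistable and no Frobenius pull-back splits as a direct sum of line bundles, Theorem \ref{finprojdim} implies that $R/I^{[q]}$ has infinite projective dimension for every $q=p^e$ (in particular $r\neq 0$), so Theorem \ref{inftyfree} applies. Writing $aq = \theta n + r$ with $0 < r < n$, I compare the degrees of the minimal generators of $\Syz_R(I^{[q]})$ (read off from the module $F_1$ appearing in Theorem \ref{inftyfree}) with those of $M_s := \Syz_R(X^s, Y^s, Z^s)$ for $s = r$ or $s = n-r$, thus upgrading the abstract isomorphism of Corollary \ref{inftysyz} to
\[
\Syz_R(I^{[q]}) \cong \begin{cases} M_r(-3\eta n) & \text{if } \theta = 2\eta,\\ M_{n-r}\bigl(-3(\eta-1)n-3r\bigr) & \text{if } \theta = 2\eta-1.\end{cases}
\]

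From the presenting sequence $0 \to \Syz_R(I^{[q]}) \to R(-aq)^3 \to R \to R/I^{[q]} \to 0$ we obtain
\[
\lK_{R/I^{[q]}}(t) = (1-3t^{aq})\lK_R(t) + \lK_{\Syz_R(I^{[q]})}(t),
\]
with $\lK_R(t) = (1-t^n)/(1-t)^3$ and, by \eqref{hilbsermr}, $\lK_{M_s}(t) = (t^n + 3t^{2s} - 3t^{n+s} - t^{3s})/(1-t)^3$. Substituting the above isomorphisms and putting everything over the common denominator $(1-t)^3$ produces $\lK_{R/I^{[q]}}(t) = f(t)/(1-t)^3$ for a concrete polynomial $f(t) = \sum_i c_i t^{a_i}$ with eight terms. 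Since $R/I^{[q]}$ is finite-dimensional, $f$ must vanish to order at least three at $t=1$; one verifies directly that $\sum c_i = \sum c_i a_i = \sum c_i a_i^2 = 0$ in both cases (a useful consistency check on the twist), whence
\[
\HKF(I,q) = \lK_{R/I^{[q]}}(1) = -\tfrac{1}{6}\sum\nolimits_i c_i a_i^3.
\]

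The remaining work is to evaluate this cubic sum. In the even case, expanding $\sum c_i a_i^3$ and regrouping in $\eta$, $n$, $r$, the identity $3\eta^2 n^3 + 3\eta n^2 r = \tfrac{3n}{4}\bigl((2\eta n + r)^2 - r^2\bigr) = \tfrac{3n}{4}\bigl((aq)^2 - r^2\bigr)$ collapses the result to $-6\bigl(\tfrac{3n}{4}(aq)^2 - \tfrac{3n}{4}r^2 + r^3\bigr)$, which upon dividing by $-6$ yields the even-case formula. The odd case proceeds analogously, with the analogous rewriting in terms of $aq$ and $n-r$ producing $\tfrac{3a^2n}{4}q^2 - \tfrac{3n}{4}(n-r)^2 + (n-r)^3$. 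The main obstacle is the bookkeeping in this final expansion: all mixed terms in $\eta$, $n$, $r$ must cancel, and any sign or coefficient slip causes the consistency check that $f$ vanish to third order to fail, which provides a built-in safeguard against arithmetic errors.
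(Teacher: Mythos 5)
Your proposal is correct; I checked the twists $-3\eta n$ (for $\theta=2\eta$) and $-3(\eta-1)n-3r$ (for $\theta=2\eta-1$) against the generator degrees of $F_1$ in Theorem \ref{inftyfree}, verified the vanishing of the zeroth, first and second moments of $f$, and confirmed that $-\tfrac{1}{6}\sum_i c_i a_i^3$ reduces to $3\eta^2n^3+3\eta n^2r+r^3=\tfrac{3n}{4}\bigl((aq)^2-r^2\bigr)+r^3$ in the even case and to $3\eta^2n^3-3\eta n^2(n-r)+(n-r)^3$ in the odd case, which are exactly the stated formulas (and they reproduce the paper's $p=37$, $n=14$ example). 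The overall strategy is the same as the paper's --- everything hinges on the Kustin--Rahmati--Vraciu resolution and the identification $\Syz_R(I\qpot)\cong M_r(l)$ or $M_{n-r}(l)$ of Corollary \ref{inftysyz} --- but the execution differs: the paper passes to the curve, applies the sheaf-theoretic formula \eqref{eq:hkfgeom}, replaces $\Th^0(C,\Syz_C(I\qpot)(m))$ by $\Th^0(C,\Syz_C(X^b,Y^b,Z^b)(m+l))$ and then outsources the summation over $m$ to the computation in the proof of Corollary 4.1 of \cite{holgeralmar}, whereas you stay entirely on the module side, pin down the twist $l$ explicitly by matching generator degrees, and extract the answer from the Hilbert series \eqref{hilbsermr} via the third-order Taylor coefficient at $t=1$. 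What your route buys is self-containedness (no sheaf cohomology, no external reference) and the built-in consistency check that the lower moments vanish; what it costs is the longer explicit polynomial expansion. One small point worth making explicit in your write-up: the graded isomorphism of Corollary \ref{inftysyz} determines the twist uniquely because a degree shift must simultaneously match all four minimal generator degrees of $F_1$ with those of the corresponding $F_1$ for $M_r$ resp.\ $M_{n-r}$, and you should note that this is consistent across all four generators (it is).
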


\begin{proof}
Under our assumptions on $\Syz_C(I)$, the quotients $R/(X^{aq},Y^{aq},Z^{aq})$ have infinite projective dimension and their resolutions are given by 
Theorem \ref{inftyfree}. By Corollary \ref{inftysyz} we have an isomorphism $\Syz_R(I\qpot)\cong\Syz_R(J)(l)$, where depending on $\theta$, we use $J$ 
and $b$ to denote either $(X^r,Y^r,Z^r)$ and $r$ or $(X^{n-r},Y^{n-r},Z^{n-r})$ and $n-r$. We obtain a diagram with exact rows
$$\xymatrix{
0\ar[r] & \Syz_C(I\qpot)(m)\ar[r]\ar[d]^{\cong} & \Oc_C(m-ap^e)^3\ar[r] & \Oc_C(m)\ar[r] & 0\\
0\ar[r] & \Syz_C(J)(m+l)\ar[r] & \Oc_C(m+l-b)^3\ar[r] & \Oc_C(m+l)\ar[r] & 0.
}$$
Taking global sections as in Equation \eqref{eq:hkfgeom} the claim follows by using 
$$\Th^0\left(C,\Syz_C\left(X^{ap^e},Y^{ap^e},Z^{ap^e}\right)(m)\right)=\Th^0\left(C,\Syz_C\left(X^b,Y^b,Z^b\right)(m+l)\right)$$
and a computation similar to that in the proof of \cite[Corollary 4.1]{holgeralmar}.
\end{proof}

\begin{exa}
Let $p=37$ and $n=14$. By Example \ref{fermatexa4} we obtain the Hilbert-Kunz function 
$$\HKF(R,37^e)=\left\{\begin{aligned} & \frac{21}{2}\cdot 37^{2e}-\frac{243}{2} && \text{if }e\equiv 1\text{ }(3),\\ & \frac{21}{2}\cdot 37^{2e}-\frac{135}{2} && \text{if }e\equiv 2\text{ }(3),\\ & \frac{21}{2}\cdot 37^{2e}-\frac{19}{2} && \text{if }e\equiv 0\text{ }(3).\end{aligned}\right.$$
\end{exa}

In \cite{holgeralmar} the authors proved that $\Omega_{\Prim^2}|_C$ admits a twisted $(0,1)$-Frobenius periodicity if $p\equiv -1\text{ }(2n)$. 
The authors tried to adopt their proof to the case $p\equiv 1\text{ }(2n)$ but failed (cf. \cite[Remark 3.5]{holgeralmar}).

\begin{thm}\label{p+-1}
Assume $p\equiv \pm 1\text{ }(2n)$ in Situation \ref{situation}. Then $\Omega_{\Prim^2}|_C$ is strongly semistable with twisted $(0,1)$-Frobenius periodicity
$$\fpb{e}(\Omega_{\Prim^2}|_C)\cong\Omega_{\Prim^2}|_C\left(-\frac{3}{2}\cdot(p^e-1)\right).$$
Moreover, the Hilbert-Kunz function of $R$ is given by
$$\HKF(R,p^e)=\frac{3n}{4}\cdot p^{2e}+1-\frac{3n}{4}.$$
\end{thm}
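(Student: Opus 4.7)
The plan is to reduce each assertion to results already established in this section, implicitly assuming $n \geq 2$ (for $n = 1$ the bundle $\Omega_{\Prim^2}|_C$ is not even semistable). First I would establish strong semistability via Corollary \ref{deltastrongsemistable}, that is, show $\delta(\tfrac{1}{n}, \tfrac{1}{n}, \tfrac{1}{n}) = 0$ using Han's Theorem \ref{hansthm}. The hypothesis $p \equiv \pm 1 \pmod{2n}$ forces $p$ to be odd and implies $p^s \equiv \pm 1 \pmod{2n}$ for every $s \geq 0$; hence $p^s/n$ sits at distance $\tfrac{1}{n}$ from an even integer and $1 - \tfrac{1}{n}$ from the adjacent odd one. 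A candidate $u \in L_{\mathrm{odd}}$ (whose coordinates are these rounding ups/downs) has odd coordinate sum precisely when one or three coordinates come from the ``odd'' side, giving $\ell_1$-distances $1 + \tfrac{1}{n}$ or $3 - \tfrac{3}{n}$ respectively, both $\geq 1$ for $n \geq 2$. Negative $s$ is handled identically with $\tfrac{1}{n}$ replaced by $\tfrac{1}{np^{|s|}}$. Han's theorem then yields $\delta = 0$.

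For the twisted periodicity I would combine strong semistability with Corollary \ref{inftysyz}. Writing $p^e = \theta n + r$ with $0 \leq r < n$, the hypothesis pins down $(\theta, r) = (\text{even}, 1)$ when $p^e \equiv 1 \pmod{2n}$ and $(\theta, r) = (\text{odd}, n - 1)$ when $p^e \equiv -1 \pmod{2n}$. A splitting $\fpb{e}(\Omega_{\Prim^2}|_C) \cong \Oc_C(l)^2$ would, by rank-$2$ semistability and the degree equation $-3np^e = 2ln$, force $l = -\tfrac{3p^e}{2} \in \Z$, impossible for $p$ odd. Hence $R/(X^{p^e}, Y^{p^e}, Z^{p^e})$ has infinite projective dimension and Corollary \ref{inftysyz} applies: in both branches ($r = 1$ or $n - r = 1$) it yields $\Syz_R(X^{p^e}, Y^{p^e}, Z^{p^e}) \cong \Syz_R(X, Y, Z)(m)$ for some $m \in \Z$. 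Sheafifying and equating the degrees $-3np^e$ and $-3n + 2ln$ then determines the twist $l = -\tfrac{3}{2}(p^e - 1)$, which is integral exactly because $p$ is odd.

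The Hilbert-Kunz formula is immediate from Theorem \ref{hkfsss} with $a = 1$: both branches $(r = 1, \theta\text{ even})$ and $(r = n-1, \theta\text{ odd})$ collapse to $\HKF(R, p^e) = \tfrac{3n}{4} p^{2e} - \tfrac{3n}{4} + 1$. The main obstacle in this plan is the $\delta$-computation, where one has to track all parity combinations of $u \in L_{\mathrm{odd}}$ and all $s \in \Z$ without missing a near pair; once $\delta = 0$ is in hand, the remaining steps are essentially bookkeeping with the infrastructure of this section.
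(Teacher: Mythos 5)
Your proposal is correct and follows essentially the same route as the paper: establish $\delta(\tfrac{1}{n},\tfrac{1}{n},\tfrac{1}{n})=0$ via Han's theorem, rule out the split case because $p$ is odd and $a=1$, deduce the twist from Corollary \ref{inftysyz} (using that $p^e$ is either even$\cdot n+1$ or odd$\cdot n+(n-1)$), and read off the Hilbert-Kunz function from Theorem \ref{hkfsss}. The only difference is that you spell out the taxicab-distance computation (including negative $s$ and the implicit restriction $n\geq 2$) which the paper leaves to the reader.
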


\begin{proof}
As $\delta(\tfrac{1}{n},\tfrac{1}{n},\tfrac{1}{n})=0$, the bundle $\Omega_{\Prim^2}|_C$ is strongly semistable. Since $p$ has to be odd by assumption 
and $a=1$, the Frobenius pull-backs of $\Omega_{\Prim^2}|_C$ are not of the form $\Oc_C(l)^2$ as mentioned at the beginning of this section. 
If $p\equiv 1\text{ }(2n)$ then all powers $p^e$ can be written in the form even$\cdot n+1$ and if 
$p\equiv -1\text{ }(2n)$ all powers $p^{2e}$ can be written as even$\cdot n+1$ and the powers $p^{2e+1}$ can be written as odd$\cdot n+n-1$. 
The periodicity of $\Syz_C(X,Y,Z)$ follows from Corollary \ref{inftysyz} and the statement about the Hilbert-Kunz function is due to Theorem \ref{hkfsss}.
\end{proof}

\begin{rem}\label{frobperdegzero}
Note that one can construct $(s,t)$-Frobenius periodicities in the classical sense, e.g. of degree zero bundles, from the twisted $(s,t)$-Frobenius periodicities 
obtained from Theorem \ref{frobpernontri} as follows: 

If $a$ is even, one can consider the bundle $\Syz_C(X^a,Y^a,Z^a)(\tfrac{3a}{2})$.

If $a$ is odd, one obtains a $(s,t)$-Frobenius periodicity of $\Syz_D(U^{2a},V^{2a},W^{2a})(3a)$ on the Fermat curve $D$ of degree $2n$ as it was done in \cite[Example 5.1]{holgeralmar}.



Recall that due to Lange and Stuhler the vector bundles of degree zero admitting a $(0,t)$-Frobenius periodicity are exactly those that are \'{e}tale trivializable 
(cf. \cite{langestuhler}). For the syzygy bundles which only admit a $(s,t)$-Frobenius periodicity with $s \geq 1$ there is only a finite trivialization, namely the composition
of the Frobenius morphism and a suitable \'{e}tale covering.

By Theorem \ref{p+-1} and this remark, the syzygy bundle $\Syz_D(U^2,V^2,W^2)(3)$ admits a $(0,1)$-Frobenius 
periodicity on the projective Fermat curve $D$ of degree $2n$, provided $p\equiv\pm 1$ modulo $2n$. St\"abler computed the \'{e}tale map trivializing this 
bundle in characteristics $p\equiv -1$ modulo $2n$ explicitly in \cite{axel}.
\end{rem}

\section{Which Frobenius periodicities can be achieved?}
\label{sec:possibleperiods}
The next examples deal with the question which twisted $(0,t)$-Frobenius periodicities of the bundles $\Omega_{\Prim^2}|_C$ can be achieved. A sufficient condition for 
having a twisted Frobenius periodicity is (cf. Theorem \ref{frobpernontri}) $\delta\left(\tfrac{1}{n},\tfrac{1}{n},\tfrac{1}{n}\right)=0,$ 
which is equivalent to the condition that the distances of all triples $v_e:=\bigl(\tfrac{p^e}{n},\tfrac{p^e}{n},\tfrac{p^e}{n}\bigr)$ to 
$L_{\text{odd}}$ are at least one. Let $p^e=\theta_e\cdot n+r_e$ with $\theta_e,r_e\in\N$ and $0\leq r_e<n$. The nearest element to $v_e$ in 
$L_{\text{odd}}$, which potentially has a taxicab distance $<1$ from $v_e$, is given by the component-wise rounding ups of $v_e$ if $\theta_e$ is even and by the component-wise rounding downs of $v_e$ if 
$\theta_e$ is odd. This leads to the (sufficient) conditions 
$$\begin{aligned}
&\left.\begin{aligned}
& 3\cdot \left(1-\tfrac{r_e}{n}\right)\geq 1 && \text{if }\theta_e\text{ is even,}\\
& 3\cdot \tfrac{r_e}{n}\geq 1 && \text{if }\theta_e\text{ is odd.}
\end{aligned}\right\}
&\Longleftrightarrow &
\left\{\begin{aligned}
& 2\cdot n\geq 3\cdot r_e && \text{if }\theta_e\text{ is even,}\\
& 3\cdot r_e\geq n && \text{if }\theta_e\text{ is odd.}
\end{aligned}\right.
\end{aligned}$$

\begin{exa}\label{fermatexa5}
Let $p$ be odd, $l\in\N$ and $n=\tfrac{p^{l+1}+1}{2}$. For $0\leq e\leq 2l+2$ we have 
\begin{equation}\label{respe}
p^e=\left\{
\begin{aligned}
& 0\cdot n+p^e && \text{if }0\leq e\leq l,\\
& \left(2\cdot p^{e-l-1}-1\right)\cdot n+n-p^{e-l-1} && \text{if }l+1\leq e\leq 2l+1,\\
& \left(2\cdot p^{l+1}-2\right)\cdot n+1 && \text{if }e= 2l+2.
\end{aligned}\right.
\end{equation}
This shows that $p^e$ is of the form $\text{even}\cdot n+p^{e'}$ or $\text{odd}\cdot n+n-p^{e'}$ for some $0\leq e'\leq l$.
Since $2n=p^{l+1}+1\geq 3p^{e'}$ for all $0\leq e'\leq l$, we see that $\delta\left(\tfrac{1}{n},\tfrac{1}{n},\tfrac{1}{n}\right)=0$.
By Corollary \ref{inftysyz} we find that $\fpb{e}(\Omega_{\Prim^2}|_C)$ is isomorphic to
$$\fpb{e'}(\Omega_{\Prim^2}|_C)\left(-\frac{3}{2}\cdot (p^e-p^{e'})\right),$$
where $e'\equiv e\text{ mod }l+1\text{ with }e'\in\{0,\ldots,l\}.$ With Equation (\ref{hilbsermr}) we find a twisted $(0,l+1)$-Frobenius 
periodicity. The Hilbert-Kunz function is given by
$$\HKF(R,p^e)=\frac{3n}{4}\cdot\left(p^{2e}-p^{2e'}\right)+p^{3e'},$$
where again $e'\equiv e\text{ mod }l+1\text{ with }e'\in\{0,\ldots,l\}.$
\end{exa}

\begin{exa}\label{fermatexa7}
Let $p=2$, $1\leq l\in\N$ and $n$ odd with $2^l<n<2^{l+1}$, hence $n=2^l+x$ with $0<x<2^l$. We want to show that $\Omega_{\Prim^2}|_C$ admits a 
twisted Frobenius periodicity if and only if $n=3$. The condition $\delta\left(\tfrac{1}{n},\tfrac{1}{n},\tfrac{1}{n}\right)=0$ forces
$$2n=2^{l+1}+2x\geq 3\cdot 2^l\geq 3\cdot 2^e$$ 
for all $0\leq e\leq l$. This is equivalent to $x\geq 2^{l-1}$. Since $2^{l+1}=n+2^l-x$ with $0<2^l-x<2^l<n$, we need that the inequality
$3\cdot (2^l-x)\geq n$ holds. This is equivalent to $x\leq 2^{l-1}$. This shows that a twisted Frobenius periodicity might appear only in the case 
$x=2^{l-1}$ resp. $n=3\cdot 2^{l-1}$. Since $n$ is odd, we obtain the single possibility $n=3$.
Now let $n=3$. Then $\Omega_{\Prim^2}|_C$ is strongly semistable since $\delta(\tfrac{1}{3},\tfrac{1}{3},\tfrac{1}{3})=0$. We remark that this is well-known: since $C$ is an elliptic curve and $\Omega_{\Prim^2}|_C$ is semistable the strong semistability follows also from \cite[Theorem 2.1]{mehtaramanathanhomogeneous}.
\end{exa}

\section{The non-strongly semistable case}
\label{nonstronglysemistablesection}
In this section we want to compute the Hilbert-Kunz function of $I=(X^a,Y^a,Z^a)$ under the condition that the syzygy bundle $\Syz_C(X^a,Y^a,Z^a)$ 
is \emph{not} strongly semi\-stable on the Fermat curve $C$. We do this by an explicit computation of the strong HN-filtration of these bundles.

\begin{lem}
\label{nonstablefrobenius}
In addition to Situation \ref{situation} let $b$ denote a natural number and write $b=nl+r$ with $0 \leq r \leq n-1$.
\begin{enumerate}
\item If $l$ is even and $r > \frac{2n}{3}$, then $\Syz_C(X^b,Y^b,Z^b)$ is
not semistable on $C$.
Moreover, one has $$\Gamma(C,\Syz_C(X^b,Y^b,Z^b)(n(l+1+\frac{l}{2}))) \neq 0.$$
\item If $l$ is odd and $r<\frac{n}{3}$, then $\Syz_C(X^b,Y^b,Z^b)$ is not
semistable on $C$. Moreover,
one has $$\Gamma(C,\Syz_C(X^b,Y^b,Z^b)(n(l+\fl{\frac{l}{2}})+3r)) \neq 0.$$
\end{enumerate}
\end{lem}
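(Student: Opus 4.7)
The plan is to reduce the non-semistability assertion to the existence of the asserted global section, and then to construct the section explicitly.

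First I would observe that a non-zero element of $\Gamma(C,\Syz_C(X^b,Y^b,Z^b)(m))$ yields an inclusion $\Oc_C(-m)\hookrightarrow\Syz_C(X^b,Y^b,Z^b)$, i.e. a line subbundle of degree $-mn$. Since $\mu(\Syz_C(X^b,Y^b,Z^b))=-3bn/2$, this subbundle destabilises precisely when $m<3b/2$. A direct computation gives $3b/2-m=3r/2-n$ in case (1) (positive iff $r>2n/3$) and $3b/2-m=n/2-3r/2$ in case (2) (positive iff $r<n/3$). Hence the non-semistability in each case follows once the section is produced.

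Next I would construct the section, which amounts to a non-trivial triple $(f_1,f_2,f_3)\in R^3_{m-b}$ with $\sum f_iX_i^b=0$ in $R$. The two base cases come directly from the Fermat relation: for $l=0$ in case (1), the triple $(X^{n-r},Y^{n-r},Z^{n-r})$ of degree $n-r$ satisfies $\sum X_i^{n-r}X_i^r=X^n+Y^n+Z^n=0$ in $R$; for $l=1$ in case (2), the triple $(Y^rZ^r,X^rZ^r,X^rY^r)$ of degree $2r$ satisfies $X^rY^rZ^r(X^n+Y^n+Z^n)=Y^rZ^r\cdot X^{n+r}+X^rZ^r\cdot Y^{n+r}+X^rY^r\cdot Z^{n+r}$, which is $0$ in $R$. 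For general $l$, assuming $R/(X^b,Y^b,Z^b)$ has infinite projective dimension, I would invoke Corollary \ref{inftysyz}, which yields a graded $R$-module isomorphism $\Syz_R(X^b,Y^b,Z^b)\cong\Syz_R(X^{r'},Y^{r'},Z^{r'})(-D)$ with $(r',D)=(r,\,3(l/2)n)$ for $l$ even and $(r',D)=(n-r,\,3((l-1)/2)n+3r)$ for $l$ odd; these shifts are read off by comparing the generator degrees coming from Theorem \ref{inftyfree} with the generator degrees for $l=0$. Transporting the Fermat generator $(X^{n-r'},Y^{n-r'},Z^{n-r'})$ of degree $n$ through this isomorphism produces a non-zero element of $\Syz_R(X^b,Y^b,Z^b)_{n+D}$, and a short computation confirms $n+D=m$ in each case.

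The main obstacle is the finite projective dimension situation, which arises when $n\mid b$ (notably when $r=0$ in case (2)) or under the characteristic-$2$ or odd-characteristic conditions of Theorem \ref{finprojdim}; there Corollary \ref{inftysyz} does not apply. In such cases $\Syz_R(X^b,Y^b,Z^b)$ admits a finite free resolution and I would handle it separately, producing the required syzygy by starting from an iterated Fermat identity such as $(X^n+Y^n+Z^n)^{(l+1)/2}=0$ in $R$ and using the Hilbert series reductions of Theorem \ref{HilbSer} to pin down a multiplier so that the resulting polynomial decomposes as $\sum f_iX_i^b$ in the asserted degree. A secondary technical point, in the infinite-projective-dimension case, is verifying that the transported Fermat section is genuinely non-zero in $\Syz_R(X^b,Y^b,Z^b)_m$; this follows from Corollary \ref{inftysyz} being an honest isomorphism of graded modules together with the minimality of the Kustin-Rahmati-Vraciu resolution.
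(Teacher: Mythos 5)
Your reduction of non-semistability to the existence of the asserted section is correct (the slope computations $3b/2-m=3r/2-n$ resp.\ $n/2-3r/2$ check out), and in the range where Corollary \ref{inftysyz} applies your bookkeeping is right: comparing the generator degrees of $F_1$ in Theorem \ref{inftyfree} for $a=b$ and for $a=r'$ does give $\Syz_R(X^b,Y^b,Z^b)\cong\Syz_R(X^{r'},Y^{r'},Z^{r'})(-D)$ with exactly the twists $D$ you state, and transporting the degree-$n$ Fermat syzygy then lands in degree $m$. This is a genuinely different route from the paper's, which simply cites the explicit constructions in \cite[Proposition 1]{miyaoka} and \cite[Lemma 4.2.8(2)]{almar}; those make no case distinction on projective dimension. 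That is precisely where your argument has a real gap: Theorem \ref{inftyfree} requires $1\leq r\leq n-1$ \emph{and} infinite projective dimension of $R/(X^b,Y^b,Z^b)$, and the excluded cases are not marginal. They include $r=0$ with $l$ odd in part (2) (always finite projective dimension by Theorem \ref{finprojdim}(1)), essentially all of characteristic $2$ once $b\geq n$ (Theorem \ref{finprojdim}(2)), and the sporadic odd-characteristic cases of Theorem \ref{finprojdim}(3) --- for instance $p=3$, $n=7$, $b=19=2\cdot 7+5$ satisfies the hypotheses of part (1) but $R/(X^{19},Y^{19},Z^{19})$ has finite projective dimension. Since the lemma is later applied to $b=ap^s$ in Theorem \ref{almarfilt} and in characteristic $0$ in Theorem \ref{hnfiltrationcharzero}, these cases cannot be set aside.

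The fallback you sketch for the finite projective dimension case does not work as described. Expanding $(X^n+Y^n+Z^n)^t=0$ with $nt=m$ produces cross terms $X^{ni}Y^{nj}Z^{nk}$ with $i,j,k$ nearly balanced, so $n\max(i,j,k)<b$ as soon as $l\geq 2$; such terms are divisible by none of $X^b,Y^b,Z^b$, and no regrouping yields a syzygy in the asserted degree (raising $t$ only pushes the total degree past $m$). A concrete instance: for $b=3n$ (so $l=3$, $r=0$ in part (2)) one needs a syzygy of total degree $m=4n$, i.e.\ with components of degree $n$; it exists, namely $(X^n+2Y^n)X^{3n}-(2X^n+Y^n)Y^{3n}+(X^n-Y^n)Z^{3n}=0$ modulo the Fermat equation, but it is not a multiple of any power of $X^n+Y^n+Z^n$ and has to be found by a separate computation. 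In general, closing this case amounts to controlling the splitting type of $\Syz_{\Prim^1}(X^b,Y^b,(X^n+Y^n)^j)$ and pushing its sections through the exact sequence of Theorem \ref{ses} --- which is the actual content of the cited proofs and is missing from your argument. Either supply that computation, or restrict your KRV argument to the infinite projective dimension case and handle the remaining cases by a different method (for $r\neq 0$ one could, for example, try to produce the section on the generic fibre of the relative curve and invoke semicontinuity of $h^0$, but for $r=0$ even that is unavailable).
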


\begin{proof}
For the proof of $(1)$ see \cite[Proposition 1]{miyaoka}. For the proof of part $(2)$ which
is rather similar to $(1)$ see \cite[Lemma 4.2.8(2)]{almar}.
\end{proof}

We list the following two neat consequences of the previous lemma.

\begin{cor}
\label{pmdegreenonstable}
In Situation \ref{situation} assume  $p \equiv n \pm 1 \!\! \mod 2n$ for an even natural number
$n \geq 4$. Then $\fpb{}(\Omega_{\Prim^2}|_C)$ is not semistable on $C$.
\end{cor}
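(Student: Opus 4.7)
The plan is to reduce the statement directly to Lemma \ref{nonstablefrobenius} applied with $b = p$. First I would use the Euler sequence restricted to $C$ to identify $\Omega_{\Prim^2}|_C$ with $\Syz_C(X,Y,Z)$; since Frobenius pull-back commutes with the formation of the syzygy bundle (because $\fpb{}\Oc_C(m) = \Oc_C(pm)$ and the $p$-th power map acts coordinate-wise on the presenting sequence), one obtains $\fpb{}(\Omega_{\Prim^2}|_C) \cong \Syz_C(X^p, Y^p, Z^p)$. Thus I only need to show that this last bundle is not semistable, which is precisely the situation covered by Lemma \ref{nonstablefrobenius}.

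Next I would split into the two residue classes and read off the pair $(l, r)$ from the Euclidean division $p = nl + r$. If $p \equiv n+1 \pmod{2n}$, write $p = 2ns + n + 1 = n(2s+1) + 1$, so $l = 2s+1$ is odd and $r = 1$. The hypothesis $n \geq 4$ gives $r = 1 < n/3$, so part (2) of the lemma applies. If instead $p \equiv n-1 \pmod{2n}$, write $p = 2ns + n - 1 = n(2s) + (n-1)$, so $l = 2s$ is even and $r = n-1$. Again $n \geq 4$ yields $n-1 > 2n/3$ (equivalent to $n > 3$), so part (1) applies. In both cases the lemma provides a line subbundle destabilizing $\fpb{}(\Omega_{\Prim^2}|_C)$.

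The hypothesis that $n$ is even is used only implicitly, to guarantee that the two residues $n \pm 1$ are odd modulo $2$ so that an odd prime $p$ coprime to $n$ can actually realize them; it plays no role in the algebraic argument itself. There is no real obstacle here beyond bookkeeping the parity of $l$ and the position of $r$ relative to $n/3$ and $2n/3$, since all the geometric content has been absorbed into Lemma \ref{nonstablefrobenius}.
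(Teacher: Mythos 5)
Your proof is correct and follows the same route as the paper: identify $\fpb{}(\Omega_{\Prim^2}|_C)$ with $\Syz_C(X^p,Y^p,Z^p)$ and apply Lemma \ref{nonstablefrobenius} with $b=p$, checking the parity of $l$ and the position of $r$ in each residue class. In fact your case assignment is the right one --- $p\equiv n+1 \bmod 2n$ gives $l$ odd, $r=1<n/3$ (part (2)) and $p\equiv n-1 \bmod 2n$ gives $l$ even, $r=n-1>2n/3$ (part (1)) --- whereas the paper's one-line proof states the two parts in the opposite order, which appears to be a harmless slip there.
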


\begin{proof}
For $p \equiv n + 1 \!\! \mod 2n$ this follows from part $(1)$ of Lemma \ref{nonstablefrobenius} and for the case $p \equiv n - 1 \!\! \mod 2n$ one applies part $(2)$.
\end{proof}

\begin{cor}
\label{acongruentdcase}
If in Situation \ref{situation} we have $a \equiv n \!\! \mod 2n$, then $\Syz_C(X^a,Y^a,Z^a)$ is not semistable on $C$.
\end{cor}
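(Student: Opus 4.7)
The plan is to deduce this corollary directly from part (2) of Lemma \ref{nonstablefrobenius} by verifying that the hypothesis $a \equiv n \!\! \mod 2n$ places us precisely in the situation where part (2) applies, with the simplest possible remainder $r=0$.

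First I would unpack the congruence: writing $a \equiv n \!\! \mod 2n$ means $a = (2k+1)n$ for some $k \in \N$. In the decomposition $a = nl + r$ with $0 \leq r \leq n-1$ required by Lemma \ref{nonstablefrobenius}, this forces $l = 2k+1$ and $r = 0$. Thus $l$ is odd and certainly $r = 0 < \tfrac{n}{3}$ (as $n \geq 1$), which is exactly the hypothesis of part (2) of the lemma.

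Applying that lemma gives both that $\Syz_C(X^a,Y^a,Z^a)$ is not semistable on $C$ and, more concretely, that
\[
\Gamma\bigl(C, \Syz_C(X^a,Y^a,Z^a)(n(l+\fl{l/2})+3r)\bigr) = \Gamma\bigl(C, \Syz_C(X^a,Y^a,Z^a)(n(3k+1))\bigr) \neq 0,
\]
which produces an explicit destabilizing line subbundle. Since the statement we want is merely the non-semistability, citing the lemma is enough.

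There is no real obstacle here: the result is a direct specialization of the preceding lemma. The only thing worth flagging is that one should record the simple arithmetic identification $a = (2k+1)n \Rightarrow (l, r) = (2k+1, 0)$ so that the reader sees why part (2), rather than part (1), is the relevant case. One could optionally add a brief sanity check that for $k = 0$ (i.e.\ $a = n$) the destabilizing section is the obvious one coming from the curve equation $X^n + Y^n + Z^n = 0$, which exhibits a non-trivial element of $\Gamma(C, \Syz_C(X^n,Y^n,Z^n)(n))$; this illuminates the geometric content of the corollary but is not logically necessary for the proof.
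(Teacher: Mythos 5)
Your proof is correct and is exactly the paper's argument: the paper's proof consists of the single line ``This follows immediately from part (2) of Lemma \ref{nonstablefrobenius},'' and your identification $a=(2k+1)n \Rightarrow (l,r)=(2k+1,0)$ with $r=0<\tfrac{n}{3}$ is precisely the (unstated) verification behind that citation. Nothing further is needed.
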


\begin{proof}
This follows immediately from part $(2)$ of Lemma \ref{nonstablefrobenius}.
\end{proof}

Now, using Lemma \ref{nonstablefrobenius} we are able to prove that the $s$th Frobenius pull-back (where $s$ is the integer in Han's Theorem \ref{hansthm}) 
is not semistable. Moreover, we explicitly compute a strong Harder-Narasimhan filtration of $\Syz_C(X^a,Y^a,Z^a)$.

\begin{thm}\label{almarfilt}
In Situation \ref{situation} assume $p>\tfrac{3a}{2n}$. If $\delta\left(\tfrac{a}{n},\tfrac{a}{n},\tfrac{a}{n}\right)\neq 0$ and $s$ is the integer of 
Han's Theorem \ref{hansthm}, then $\fpb{s}(\Syz_C(X^a,Y^a,Z^a))$ is not semistable. Let $ap^s=nl+r$ with $l\in\N$ and $0\leq r\leq n-1$. Then the 
Frobenius pull-back $\fpb{s}(\Syz_C(X^a,Y^a,Z^a))$ has a strong Harder-Narasimhan filtration given by
$$0\ra \Oc_C(-m)\ra\fpb{s}(\Syz_C(X^a,Y^a,Z^a))\ra \Oc_C\left(m-3ap^s\right)\ra 0,$$
with $m=n(l+1+\tfrac{l}{2})$ if $l$ is even and $m=n(l+\lfloor\tfrac{l}{2}\rfloor)+3r$ if $l$ is odd. Moreover, this filtration is minimal 
for $p\geq n-3$ and $s\geq 1$ in the sense that $\fpb{(s-1)}(\Syz_C(X^a,Y^a,Z^a))$ is semistable.
\end{thm}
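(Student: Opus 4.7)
My plan is to exhibit an explicit line-bundle sub of $\fpb{s}(\Syz_C(X^a,Y^a,Z^a))$, verify it is maximal by the Hilbert--Kunz multiplicity formula of Theorem \ref{planecurvehkformula}(2), deduce strongness from the Frobenius pull-back of the exact sequence, and handle minimality by combining integrality of the destabilizing degree with the sharp bounds of Theorem \ref{planecurvehkformula}(3)--(4).

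First I would translate Han's criterion at level $s$ into the hypothesis of Lemma \ref{nonstablefrobenius}. For the triple $t=(a/n,a/n,a/n)$, each coordinate of $p^s t$ equals $l+r/n$, so each candidate $u_i$ is either $l$ or $l+1$. Requiring $u_1+u_2+u_3$ to be odd and $\|p^s t-u\|_1<1$ forces $u=(l+1,l+1,l+1)$ with $r>2n/3$ when $l$ is even, and $u=(l,l,l)$ with $r<n/3$ when $l$ is odd: exactly the two hypotheses of Lemma \ref{nonstablefrobenius} applied to $b=ap^s$. The lemma produces a non-zero global section of $\Syz_C(X^{ap^s},Y^{ap^s},Z^{ap^s})(m)$ for the stated value of $m$. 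Since $\Syz_C(X^{ap^s},\ldots)=\fpb{s}(\Syz_C(X^a,\ldots))$, this yields an injection $\Oc_C(-m)\hookrightarrow \fpb{s}(\Syz_C(X^a,Y^a,Z^a))$ and the desired failure of semistability.

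Next I would upgrade this embedding to the maximal destabilizing subbundle. Setting $\epsilon:=1-\|p^s t-u\|_1$, a direct computation gives $\epsilon=(3r-2n)/n$ when $l$ is even and $\epsilon=(n-3r)/n$ when $l$ is odd; by Han's theorem $\delta(a/n,a/n,a/n)=\epsilon/p^s$. Plugging this into Theorem \ref{planecurvehkformula}(2) via Theorem \ref{hkmfermat} yields $\deg(\Lc)=-3anp^s/2+n^2\epsilon/2$, which a short computation using $ap^s=nl+r$ identifies with $-mn$ in each parity case. Uniqueness of the HN-filtration then forces $\Lc\cong\Oc_C(-m)$, with quotient $\Oc_C(m-3ap^s)$. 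Strongness is automatic: pulling the sequence back by $\fpb{e}$ produces a sub of degree $-mnp^e$ inside $\fpb{s+e}(\Syz_C(X^a,Y^a,Z^a))$, which matches the degree predicted by Theorem \ref{planecurvehkformula}(2) at level $s+e$.

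The main obstacle will be the minimality claim. Assuming for contradiction that $\fpb{s-1}(\Syz_C(X^a,Y^a,Z^a))$ is not semistable, let $0\leq e\leq s-1$ be the smallest index at which semistability fails. Applying Theorem \ref{planecurvehkformula}(3) if $e=0$ or (4) if $e\geq 1$, and equating the resulting expression for $\HKM(I)$ with $3na^2/4+n^3\epsilon^2/(4p^{2s})$ from Theorem \ref{hkmfermat}, one obtains $\ell=n^2\epsilon/p^{s-e}$. The closed forms above show that $n\epsilon$ is a positive integer with $n\epsilon\leq n-3$, and since $\gcd(p,n)=1$ the divisibility $p^{s-e}\mid n\epsilon$ must hold; combined with $p^{s-e}\geq p\geq n-3$ this squeezes out the edge case $p=n-3$, $s-e=1$, $n\epsilon=n-3$. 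This remaining case I would dispose of by confronting the resulting value $\ell=n$ with the parity congruence of part (3) respectively (4) and with the observation that Han's minimality of $s$ precludes the sufficient condition of Lemma \ref{nonstablefrobenius} from already holding at level $0$. This completes the outline.
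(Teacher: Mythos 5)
Your proposal reproduces the paper's proof essentially step for step: translating Han's condition at level $s$ into the hypotheses of Lemma \ref{nonstablefrobenius} (forcing $r>\tfrac{2n}{3}$ for $l$ even and $r<\tfrac{n}{3}$ for $l$ odd), using the resulting section together with Theorem \ref{hkmfermat} and Theorem \ref{planecurvehkformula}(2) to pin down $\deg(\Lc)=-mn$ and hence $\Lc\cong\Oc_C(-m)$, and extracting minimality from the identity $\ell=n^2\epsilon/p^{s-e}$; your extra check that the pull-backs of $\Oc_C(-m)$ stay maximally destabilizing is a harmless addition.

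One step of the minimality argument needs repair. Your assertion that the closed forms give $n\epsilon\leq n-3$ is false in the case $l$ odd, $r=0$: there $n\epsilon=n-3r=n$, so the squeeze $p\leq p^{s-e}\leq n\epsilon\leq n-3$ breaks down. The paper disposes of this case separately: $r=0$ and $\gcd(p,n)=1$ force $n\mid a$, and $l=ap^s/n$ odd forces $a/n$ odd, so $(\tfrac{a}{n},\tfrac{a}{n},\tfrac{a}{n})$ already lies in $L_{\text{odd}}$ and Han's minimal $s$ is $0$, contradicting the standing assumption $s\geq 1$. (Alternatively, $p^{s-e}\mid n\epsilon=n$ together with $\gcd(p,n)=1$ gives $s=e$ at once.) You should insert one of these observations; as written your bound is simply wrong for that case. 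Separately, be careful with your proposed treatment of the boundary configuration $p=n-3$, $s-e=1$, $n\epsilon=n-3$: the parity congruence $\ell\equiv pna \bmod 2$ with $\ell=n$ gives no contradiction when $n$ is even, so it does not close that case in general. The paper's own argument only establishes $p\leq n-3$ as the obstruction and is no sharper at $p=n-3$, so you are not worse off than the source, but the parity trick should not be presented as a complete resolution.
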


\begin{proof}
Since $\Sc:=\Syz_C(X^a,Y^a,Z^a)$ is not strongly semistable, we know by Han's Theorem \ref{hansthm} and Corollary \ref{deltastrongsemistable} that the taxicab distance from $(\frac{ap^s}{n},\frac{ap^s}{n},\frac{ap^s}{n})$ to the nearest element in $L_{\text{odd}}$ is $<1$. We have $s \geq 0$ due to the assumption $p>\frac{3a}{2n}$. We compute the taxicab distance in dependence on $l$. First, we consider the case where $l$ is even. So the distance from $\frac{ap^s}{n}$ to the nearest odd integer is $\frac{n-r}{n}$ and the taxicab distance to the closest element in $L_{\text{odd}}$ is $3 \frac{n-r}{n}=3-\frac{3r}{n}$, which is by assumption $<1$. Hence, we obtain $r>\frac{2n}{3}$. Now we apply Lemma \ref{nonstablefrobenius}(1) to $b=ap^s$ and see that $\fpb{s}(\Sc)\cong \Syz_C(X^{ap^s},Y^{ap^s},Z^{ap^s})$ is not semistable. Moreover, we have a non-trivial mapping
$\Oc_C(-n(l+1+\frac{l}{2})) \lto \fpb{s}(\Sc)$. We want to prove that this mapping constitutes the HN-filtration of the $s$th Frobenius pull-back, i.e., the mapping has no zeros on $C$. First, we compute the Hilbert-Kunz multiplicity of the ideal $I:=(X^a,Y^a,Z^a)$ in the ring $R$. Since
$$\delta \left(\frac{a}{n},\frac{a}{n},\frac{a}{n}\right) = \frac{1}{p^s} \left(1 - \left(3 - \frac{3r}{n}\right) \right) = \frac{1}{p^s} \left(\frac{3r-2n}{n}\right)$$ we obtain
$$\HKM(I)=\frac{3n}{4}a^2+ \frac{n^3}{4}\left(\frac{1}{p^s} \left(\frac{3r-2n}{n}\right)\right)^2=\frac{3n}{4}a^2+\frac{(3r-2n)^2}{4p^{2s}}\cdot n$$
via Theorem \ref{hkmfermat}. Now we use the Hilbert-Kunz multiplicity $\HKM(I)$ to read off the degree of the destabilizing invertible sheaf $\Lc \subset \fpb{s}(\Sc)$. Theorem \ref{planecurvehkformula}(2) yields
$$\deg(\Lc) = -np^s \left(\frac{3a}{2}-\sqrt{\frac{(3r-2n)^2}{4p^{2s}}}\right) = -n^2(l+1+\frac{l}{2}).$$
Thus $\deg(\Lc \otimes \Oc_C(n(l+1+\frac{l}{2})))=0$. Since this line bundle does have a non-trivial section, we obtain $\Lc \cong \Oc_C(-n(l+1+\frac{l}{2}))$ and the HN-filtration is indeed
$$0 \subset \Oc_C(-n(l+1+\frac{l}{2})) \subset \fpb{s}(\Sc).$$
The assertion on the quotient line bundle is clear since the determinant bundle is additive on short exact sequences.

The case that $l$ is odd follows essentially in the same way and we omit it here.
Finally, we prove the assertion about the minimality of the HN-filtration. Assume the minimal integer $e$ such that 
$\fpb{e}(\Sc)$ is not semistable is strictly smaller than $s$. If $l$ is even, we obtain the equality
$\ell=n(3r-2n)p^{e-s}$, where the integer $\ell$ is defined as in Theorem \ref{planecurvehkformula}(4). But this equality can only hold for prime numbers $p \leq n-3$ since $0 < 3r-2n \leq n-3$ (we have $r \leq n-1$ and $p \nmid n$).

If $l$ is odd, we have $\ell=n(n-3r)p^{e-s}$. If $r\geq 1$, we can conclude as above. In case of $r=0$, we see that $p^s$ has to divide $l$ and we can conclude that $a=nb$ with $b$ odd. But this means that the minimal $s$ of Han's Theorem is actually $s=0$ which contradicts the assumption $s \geq 1$.
\end{proof}

If the characteristic is sufficiently large, Theorem \ref{almarfilt} also yields a numerical criterion for semistability of the syzygy bundle $\Syz_C(X^a,Y^a,Z^a)$. 
Semistability of these bundles in characteristic $0$ is part of Section \ref{miyaokasection}.

\begin{cor}
\label{semistabilitycriterionbigchar}
Assume the situation of Theorem \ref{almarfilt}. If $\chara(k)\geq n-3$, then $\Syz_C(X^a,Y^a,Z^a)$ is semistable if and only if $s>0$.
\end{cor}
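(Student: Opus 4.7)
The plan is to deduce this directly from the two halves of Theorem \ref{almarfilt}, together with the elementary observation that non-semistability is preserved under Frobenius pull-back. Both directions of the equivalence are embedded in that theorem; the only additional ingredient needed is a propagation step from the $(s-1)$-st pull-back back to the bundle itself.

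If $s=0$, then the first assertion of Theorem \ref{almarfilt} directly gives that $\fpb{0}(\Syz_C(X^a,Y^a,Z^a)) = \Syz_C(X^a,Y^a,Z^a)$ is not semistable, with no extra hypothesis on the size of $p$ needed. This handles the implication ``semistable $\Rightarrow s>0$''. For the converse, suppose $s\geq 1$. The assumption $\chara(k) \geq n-3$ combined with $s \geq 1$ activates the minimality clause of Theorem \ref{almarfilt}, yielding semistability of $\fpb{s-1}(\Syz_C(X^a,Y^a,Z^a))$.

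To bootstrap this into semistability of $\Syz_C(X^a,Y^a,Z^a)$ itself, I would invoke the standard monotonicity fact: if a rank-$2$ bundle $\Ec$ on $C$ is not semistable, with destabilizing line subbundle $\Lc \hookrightarrow \Ec$, then $\fpb{}(\Lc) \hookrightarrow \fpb{}(\Ec)$ is still destabilizing, since Frobenius pull-back scales degrees by $p$ and preserves ranks. Iterating, if some $\fpb{e}(\Syz_C(X^a,Y^a,Z^a))$ with $0\leq e\leq s-1$ were non-semistable, then $\fpb{s-1}(\Syz_C(X^a,Y^a,Z^a))$ would also be non-semistable, contradicting what was just established. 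Specializing to $e=0$ shows that $\Syz_C(X^a,Y^a,Z^a)$ is semistable, completing the equivalence.

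I do not expect any real obstacle: the corollary is essentially a repackaging of the two explicit facts proved in Theorem \ref{almarfilt}, glued together by the trivial monotonicity of non-semistability under Frobenius pull-back. The only thing worth double-checking when writing out the argument is that the hypothesis $p>\frac{3a}{2n}$ of Theorem \ref{almarfilt} is still in force in both directions, and that the condition $\chara(k)\geq n-3$ is used exactly where the minimality of the strong HN-filtration is invoked.
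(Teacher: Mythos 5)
Your argument is correct and is exactly the unpacking of what the paper dismisses as ``an immediate consequence of Theorem \ref{almarfilt}'': the case $s=0$ gives non-semistability directly, and for $s\geq 1$ the minimality clause plus the standard fact that Frobenius pull-back preserves destabilizing subsheaves (degrees and slopes both scale by $p$) yields semistability of the bundle itself. No gaps; this matches the paper's intended proof.
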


\begin{proof}
This is an immediate consequence of Theorem \ref{almarfilt}.
\end{proof}

\begin{exa}
Let $p \neq 3$ and consider the Fermat cubic $C$ ($n=3$), which is an elliptic curve.  We recall that by 
\cite[Theorem 2.1]{mehtaramanathanhomogeneous} the syzygy bundle $\Syz_C(X^a,Y^a,Z^a)$ is semistable on $C$ if and only if it is strongly semistable, 
which is by Corollary \ref{deltastrongsemistable} equivalent to $\delta\left(\frac{a}{n},\frac{a}{n},\frac{a}{n} \right) = 0$. 
If $p>\frac{a}{2}$, it follows from Theorem \ref{almarfilt} that this bundle is not semistable on $C$ if and only if $a=3l$ for some odd integer $l$. 
This description for non-semistability does not hold when $p \leq \frac{a}{2}$. For instance, let $p=2$ and $a=3\cdot 2+1=7$. Then the taxicab 
distance from $(\frac{7}{6},\frac{7}{6},\frac{7}{6})$ to $(1,1,1) \in L_{\text{odd}}$ equals $\frac{3}{6}<1$ and hence 
$\delta\left(\frac{7}{3},\frac{7}{3},\frac{7}{3} \right) \neq 0$.
\end{exa}

Combining Theorem \ref{almarfilt} with \cite[Theorem 2.1]{claudia}, one obtains the Hilbert-Kunz functions of the ideals $(X^a,Y^a,Z^a)$ for $q = p^e \gg 0$. 
Note that the term $\HKM(I)$ can be explictly computed via Theorem \ref{hkmfermat}.

\begin{cor}\label{almarhkf}
In the situation of Theorem \ref{almarfilt} one has 
\begin{equation}\label{eq:hkf}\HKF(I,p^e) = \HKM(I)p^{2e}\text{ for all } e \gg 0.\end{equation}
\end{cor}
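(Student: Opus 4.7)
The plan is to combine the explicit strong HN-filtration of Theorem \ref{almarfilt} with Miller's Theorem \cite[Theorem 2.1]{claudia}, which (as recalled preceding Theorem \ref{finprojdim}) asserts that $\HKF(I, p^e) = \HKM(I^{[p^{e_0}]})\cdot p^{2(e-e_0)}$ for all $e\geq e_0$ whenever $R/I^{[p^{e_0}]}$ has finite projective dimension over $R$. Combined with the elementary scaling identity $\HKM(I^{[p^{e_0}]}) = p^{2e_0}\,\HKM(I)$, such an identity immediately collapses to the desired formula $\HKF(I,p^e) = \HKM(I)\, p^{2e}$. The real task is therefore to produce some $e_0$ with $\mathrm{pd}_R(R/I^{[p^{e_0}]}) < \infty$.

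For this, by Theorem \ref{almarfilt} the strong HN-filtration of $\fpb{s}(\Sc)$ (with $s$ the integer supplied by Han's Theorem \ref{hansthm}) is a short exact sequence of line bundles
\[0 \lra \Oc_C(-m) \lra \fpb{s}(\Sc) \lra \Oc_C(m - 3ap^s) \lra 0,\]
and pulling back by $\fpb{e-s}$ gives the analogous extension for $\fpb{e}(\Sc)\cong\Syz_C(I^{[p^e]})$, with extension class in $H^1(C, \Oc_C((-2m+3ap^s)p^{e-s}))$. The destabilizing inequality $m < \tfrac{3ap^s}{2}$ makes the degree of this line bundle strictly positive and unbounded in $e$, so Serre vanishing forces the extension to split for all $e$ sufficiently large. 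Hence $\Syz_C(I^{[p^e]})$ decomposes as a direct sum of two line bundles of the form $\Oc_C(k)$.

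The final step is to promote this sheaf-level splitting to a free decomposition of the graded module $\Syz_R(I^{[p^e]})$, thereby confirming the hypothesis of Miller's theorem. The cleanest route is to invoke Theorem \ref{finprojdim}(3) directly: one checks that the hypothesis $\delta\bigl(\tfrac{a}{n},\tfrac{a}{n},\tfrac{a}{n}\bigr)\neq 0$ translates, with $I^{[p^{e_0}]}$ in place of $I$, into the Kustin-Rahmati-Vraciu arithmetic condition once $e_0$ is taken sufficiently large. The main obstacle is precisely this dictionary between Han's parametrization of the $\delta$-function and the numerical condition (3); while both encode the same phenomenon that $p^{\bullet}\cdot(a/n)$ lands close to an odd integer, matching the explicit constants of the three sub-cases in Theorem \ref{finprojdim}(3) requires a small case analysis (e.g., one easily handles $p^e\equiv 2\pmod 3$ with $J$ taken from Han's theorem, while the case $p^e\equiv 1\pmod 3$ needs an auxiliary rescaling of $e_0$). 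Alternatively, one may bypass Miller entirely and derive the formula by applying Riemann-Roch term-by-term to \eqref{eq:hkfgeom} using the pulled-back HN-filtration, observing that the remaining boundary corrections lie in a bounded range of $t$-values and cancel pairwise for $e \gg 0$.
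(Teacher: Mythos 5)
Your proposal is correct and follows essentially the same route as the paper, whose entire proof is the combination of Theorem \ref{almarfilt} with Miller's theorem \cite[Theorem 2.1]{claudia}: pulling back the strong HN-filtration, the extension class lives in $H^1(C,\Oc_C((3ap^s-2m)p^{e-s}))$ with $3ap^s-2m>0$ by the destabilizing inequality, so the sequence splits once this degree exceeds $2g-2$, giving finite projective dimension of $R/I^{[p^e]}$ and hence $\HKF(I,p^e)=\HKM(I^{[p^{e_0}]})p^{2(e-e_0)}=\HKM(I)p^{2e}$. The one superfluous part is your detour through the Kustin--Rahmati--Vraciu criterion (Theorem \ref{finprojdim}(3)) to ``promote'' the sheaf splitting to the module level: $\Syz_R(I^{[p^e]})$ is a second syzygy over the two-dimensional normal graded domain $R$, hence reflexive, hence equal to $\bigoplus_m\Gamma(C,\Syz_C(I^{[p^e]})(m))$, so the splitting $\Syz_C(I^{[p^e]})\cong\Oc_C(k_1)\oplus\Oc_C(k_2)$ immediately yields freeness of the module and no case analysis matching Han's parametrization to the arithmetic condition in Theorem \ref{finprojdim}(3) is required.
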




\begin{exa}\label{p3n7}
Let $p=3$ and $n=7$. Then $\delta\left(\tfrac{1}{7},\tfrac{1}{7},\tfrac{1}{7}\right)=\tfrac{1}{63}$ with $s=2$. By Theorem \ref{almarfilt} we have the 
strong Harder-Narasimhan filtration
$$0\ra\Oc_C(-13)\ra\fpb{2}(\Syz_C(X,Y,Z))\ra\Oc_C(-14)\ra 0.$$
Since $-3^t+4<0\Leftrightarrow t\geq 2$, the $e$-th Frobenius pull-backs of $\Syz_C(X,Y,Z)$ split as 
$$\Oc_C\left(-13\cdot p^{e-2}\right)\oplus\Oc_C\left(-14\cdot p^{e-2}\right)$$
for $e\geq 4$. This gives
$$\HKF(R,3^e)=\HKM(R)\cdot 3^e=\frac{427}{81}\cdot 3^e$$
for $e\geq 4$. The other values are $\HKF(R,3^0)=1$, $\HKF(R,3^1)=27$, $\HKF(R,3^2)=419$ and $\HKF(R,3^3)=3843$ by an explicit computation with CoCoA 
\cite{CocoaSystem}. Moreover, another explicit computation shows
\begin{align*}
\Syz_R(X^9,Y^9,Z^9) &\cong\Syz_R(X^5,Y^5,Z^5)(-6) \text{ and}\\
\Syz_R(X^{27},Y^{27},Z^{27}) &\cong R(-39)\oplus R(-42).
\end{align*}
\end{exa}

\begin{rem}
We want to relate our results on the Hilbert-Kunz functions of the ideals $(X^a,Y^a,Z^a)$ with those of other authors. 
As a consequence of a result of Brenner \cite[Theorem 6.1]{bredim2func}, the Hilbert-Kunz function of an $(X,Y,Z)$-primary ideal $I$ of the ring 
$R=k[X,Y,Z]/(X^n+Y^n+Z^n)$ has the form
\begin{align}\label{eq:holgersphi}
e\mapsto \HKM(I)\cdot p^{2e}+\phi(p,e),
\end{align}
where $\phi(p,\_)$ is an eventually periodic function. By Equation \eqref{eq:hkf}, we see that $\phi(p,e)=0$ 
for all large $e$ and all $p$ coprime to $n$ if some Frobenius pull-back of $\Syz_C(I)$ splits as a direct sum of twisted structure sheaves. If this is 
not the case, we obtain from Theorem \ref{hkfsss} that the shape of $\phi(p,\_)$ does only depend on the residue class of $p$ modulo $2n$. All in all, 
we have seen that for every fixed $n$ there are only finitely many possibilities for $\phi(p,\_)$.
\end{rem}

\section{Strongly semistable reduction on the relative Fermat curve}
\label{miyaokasection}
In this section we deal with a problem proposed by Brenner in \cite[Problem 5]{brennerstronglysemistable} which contains a special case of Miyaoka's problem \cite[Problem 5.4]{defsss}. 

\begin{problem}[Brenner]
\label{brennerfermatproblem}
How does the strong semistability of $\Syz_C(X^a,Y^a,Z^a)$ on the Fermat curve $C$ of degree $n$ depend on the characteristic $p$, the degree $n$ and the integer $a$? In particular, for fixed $n$ and $a$, is the set of prime numbers such that 
$\Syz_C(X^a,Y^a,Z^a)$ is strongly semistable finite, infinite or does it contain almost all prime numbers?
\end{problem}

We have already answered the first part by Theorem \ref{hkmfermat} and Corollary \ref{deltastrongsemistable} in Section \ref{sec:hkviavb} (cf. also Example \ref{syzsquarestableexample}). The following theorem gives a numerical criterion for semistability of the syzygy bundle $\Syz_C(X^a,Y^a,Z^a)$ on a Fermat curve in characteristic $0$. It also shows that if the syzygy bundle is semistable in characteristic $0$, then it has strongly semistable reduction for infinitely many prime numbers. 
 Before we state the theorem, we recall some notation for a relative curve $\Cc \ra \Spec \Z$. For a prime number $p$ we denote by $\Cc_p:= \Cc \times_{\Spec R} \Spec \F_p$ the \emph{special fiber} over the closed point $(p) \in \Spec \Z$ and by $\Cc_0:= \Cc \times_{\Spec R} \Spec \Q$ the \emph{generic fiber} over the generic point $(0) \in \Spec \Z$. Finally, we recall that by a theorem of Dirichlet there exist infinitely many prime numbers in any arithmetic progression.

\begin{thm}
\label{fermatgenericfibersemistability}
Let $a \geq 1$  be an integer, and consider the smooth projective relative Fermat curve
$\Cc := \Proj(\Z_n[X,Y,Z]/(X^n+Y^n+Z^n)) \lto \Spec \Z_n$. Write $a=nl+r$ with $0 \leq r < n$ and let $\tilde{r} \equiv a \!\! \mod 2n$ $(0 \leq \tilde{r} <2n)$.
Then the following conditions are equivalent:
\begin{enumerate}
\item One has $r \leq \frac{2n}{3}$ if $l$ is even and $r \geq \frac{n}{3}$ if $l$ is odd.
\item One has $\tilde{r} \leq \frac{2n}{3}$ if $\tilde{r}<n$ and $\tilde{r} \geq \frac{4n}{3}$ if $\tilde{r} \geq n$.
\item For all prime numbers $p>\max\{n-3, \frac{3a}{2n}\}$ the integer $s$ of Han's Theorem \ref{hansthm} is either $\geq 1$ or $\delta(\frac{a}{n},\frac{a}{n},\frac{a}{n})=0$.
\item The syzygy bundle $\Syz_{\Cc_p}(X^a,Y^a,Z^a)$ is strongly semistable on the special fiber $\Cc_p$ for all prime numbers $p \equiv \pm 1 \!\! \mod 2n$ with $p>\frac{3a}{2n}$.
\item The syzygy bundle $\Syz_{\Cc_0}(X^a,Y^a,Z^a)$ is semistable on the generic fiber $\Cc_0$.
\end{enumerate}
\end{thm}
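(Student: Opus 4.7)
The plan is to close the cycle $(1)\Leftrightarrow(2)$, $(1)\Leftrightarrow(3)$, $(1)\Rightarrow(4)\Rightarrow(5)\Rightarrow(1)$. The equivalence $(1)\Leftrightarrow(2)$ is a direct rewriting: if $l$ is even then $nl\equiv 0\pmod{2n}$ and $\tilde r=r$, while if $l$ is odd then $nl\equiv n\pmod{2n}$ and $\tilde r=n+r$, so the two inequalities in $(1)$ translate exactly into the two inequalities in $(2)$. For $(1)\Leftrightarrow(3)$ I inspect the integer $s$ of Han's Theorem \ref{hansthm} with $t=(a/n,a/n,a/n)$: a short case analysis of the four odd-sum roundings of $(x,x,x)$ at $x=l+r/n$ shows that the minimal taxicab distance from $t$ to $L_{\mathrm{odd}}$ equals $\min(1+r/n,\,3-3r/n)$ for $l$ even and $\min(3r/n,\,2-r/n)$ for $l$ odd. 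This minimum is $<1$ precisely when $(1)$ fails, so $(1)$ holds iff $s\geq 1$ for every $p$ (or $\delta=0$), which via Corollary \ref{semistabilitycriterionbigchar} is exactly $(3)$.

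For $(1)\Rightarrow(4)$ I want $\delta(a/n,a/n,a/n)=0$ whenever $p\equiv \pm 1\pmod{2n}$ and $p>3a/(2n)$; by Corollary \ref{deltastrongsemistable} this yields strong semistability. Writing $ap^s=nl_s+r_s$, the assumption on $p$ gives $ap^s\equiv \pm a\pmod{2n}$ for every $s\geq 0$, so $(l_s,r_s)$ corresponds to either $\tilde r$ or $2n-\tilde r$. Condition $(2)$ is symmetric under $\tilde r\mapsto 2n-\tilde r$, hence the hypothesis of $(1)$ holds for the pair $(l_s,r_s)$ at every scale $s$. Applying the distance computation from the previous paragraph at each scale then gives $\left\Vert p^st-u\right\Vert_1\geq 1$ for all $s\geq 0$ and all $u\in L_{\mathrm{odd}}$, i.e., $\delta=0$.

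The implication $(4)\Rightarrow(5)$ is a spreading-out argument. If $\Syz_{\Cc_0}(X^a,Y^a,Z^a)$ were not semistable, its maximal destabilizing line subbundle $\Lc_0$ would extend to a line subbundle of $\Syz_{\Cc_U}(X^a,Y^a,Z^a)$ over some dense open $U\subset\Spec\Z_n$, with the same relative degree on every fibre over $U$. By Dirichlet's theorem there are infinitely many primes $p\equiv \pm 1\pmod{2n}$, almost all of which lie in $U$; at any such $p$ the specialization destabilizes $\Syz_{\Cc_p}(X^a,Y^a,Z^a)$, contradicting $(4)$. Finally $(5)\Rightarrow(1)$ is by contraposition: if $(1)$ fails, then the construction underlying Lemma \ref{nonstablefrobenius} of a non-trivial global section of a sufficiently twisted syzygy bundle consists of polynomial manipulations that are valid over $\Q$ and hence produce a destabilizing line subbundle already on the generic fibre $\Cc_0$.

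The main obstacle is the spreading-out step $(4)\Rightarrow(5)$: one has to verify that the generic-fibre destabilizer really extends to a line subbundle of the family (after possibly shrinking the base), preserves its degree on closed fibres, and that the dense open obtained this way meets infinitely many primes $\equiv\pm 1\pmod{2n}$. The first two points rest on standard flatness and constancy-of-Euler-characteristic facts for coherent sheaves on smooth families of curves, and the last is Dirichlet, but combining them cleanly in mixed characteristic requires some care.
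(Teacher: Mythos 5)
Your proof is correct and follows essentially the same route as the paper: the cycle $(1)\Leftrightarrow(2)$, $(1)\Leftrightarrow(3)$, $(1)/(2)\Rightarrow(4)\Rightarrow(5)\Rightarrow(1)$, with the taxicab-distance computation at scale $s=0$ (resp.\ at all scales, via $ap^s\equiv\pm a \bmod 2n$ and the symmetry of $(2)$ under $\tilde r\mapsto 2n-\tilde r$) doing the numerical work, openness of semistability spelled out for $(4)\Rightarrow(5)$, and the characteristic-free content of Lemma \ref{nonstablefrobenius} for $(5)\Rightarrow(1)$. The only cosmetic slip is the citation of Corollary \ref{semistabilitycriterionbigchar} in the $(1)\Leftrightarrow(3)$ step: condition $(3)$ is purely numerical and your scale-$0$ distance computation already establishes the equivalence directly (using, as noted in Theorem \ref{almarfilt}, that $s\geq 0$ once $p>\tfrac{3a}{2n}$), whereas that corollary concerns semistability and is not needed there.
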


\begin{proof}
$(1) \Leftrightarrow (2)$. This is obvious.\\
$(1) \Rightarrow (3)$. Assume there is a prime number $p > \frac{3a}{2n}$ such that $s=0$. Then $\Syz_{\Cc_p}(X^a,Y^a,Z^a)$ is not semistable on the fiber $\Cc_p$ by Theorem \ref{almarfilt}. The proof of Theorem \ref{almarfilt} shows that we have either $r>\frac{2n}{3}$ or $r < \frac{n}{3}$ depending on $l$ . But this contradicts the assumption.\\
$(3) \Rightarrow (1)$. Suppose the assumption on $r$ in (1) does not hold. Then the bundle $\Syz_{\Cc_p}(X^a,Y^a,Z^a)$ is non-semistable on every fiber $\Cc_p$ by Lemma
\ref{nonstablefrobenius}. But this contradicts Theorem \ref{almarfilt} since $s \geq 1$ and $\fpb{s}(\Syz_{\Cc_p}(X^a,Y^a,Z^a))$ is the first non-semistable Frobenius pull-back in characteristics $p \geq n-3$.\\
$(2) \Rightarrow (4)$. We have $p^sa \equiv \pm \tilde{r} \!\! \mod 2n$ for every prime number $p \equiv \pm 1 \!\! \mod 2n$ and every $s\geq 0$.
If $\tilde{r} <n$, then the distance from $\frac{p^s}{n}$ to the nearest odd integer is $\frac{n-\tilde{r}}{n}$. Hence the taxicab distance from $(\frac{p^sa}{n},\frac{p^sa}{n},\frac{p^sa}{n})$ to the nearest element in $L_{\text{odd}}$ is
$$3 \frac{n- \tilde{r}}{n}=3-\frac{3 \tilde{r}}{n} \geq 3 - \frac{2n}{n}=1.$$
Similarly, if $\tilde{r} \geq n$, then the distance from $\frac{p^sa}{n}$ to the nearest odd integer
is $\frac{\tilde{r}-n}{n}$, and thus the taxicab distance to the nearest element in $L_{\text{odd}}$ is
$$3 \frac{\tilde{r}-n}{n}=\frac{3\tilde{r}}{n}-3 \geq \frac{4n}{n}-3=1.$$
Hence $\delta(\frac{a}{n},\frac{a}{n},\frac{a}{n})=0$  in characteristics $p \equiv \pm 1 \!\! \mod 2n$ (with $p>\frac{3a}{2n}$) and the syzygy bundle $\Syz_{\Cc_p}(X^a,Y^a,Z^a)$ is strongly semistable by Corollary \ref{deltastrongsemistable}.\\
$(4) \Rightarrow (5)$. This follows from the openness of semistability (see \cite[paragraph after Proposition 5.2]{defsss}.\\
$(5) \Rightarrow (1)$. This follows immediately from Lemma \ref{nonstablefrobenius}.
\end{proof}

\begin{rem}
To determine the semistability of the bundles $\Syz_C(X^a,Y^a,Z^a)$ one might also consider the application of restriction theorems. It is well-known that the vector bundle $\Sc:=\Syz_{\Prim^2}(X^a,Y^a,Z^a)$ is stable on the projective plane with Chern classes $c_1(\Sc)=-3a$ and $c_2(\Sc)=3a^2$ (see for instance \cite[Corollary 3.2]{brennerlookingstable}). Hence its discriminant equals $\Delta(\Sc)=4c_2(\Sc)-c_1(\Sc)^2=3a^2$. So the restriction theorem of Langer \cite[Theorem 2.19]{langersurvey} tells us that $\Sc|_C$ is stable on every smooth curve $C$ of degree $n > \frac{3a^2+1}{2}$. But this bound grows quadratically with $a$ and therefore becomes expeditiously high.
\end{rem}

It was shown in \cite[Corollary 2]{miyaoka} that the set of primes where the bundle $\Syz_C(X^a,Y^a,Z^a)$ has strongly semistable reduction contains in general not almost all prime numbers disproving a stronger version of \cite[Problem 5.4]{defsss} which was conjectured by N. I. Shepherd-Barron in \cite{shepherdbarronsemistability}. That is, strong semistability is not an open property in arithmetic deformations. The following example shows that this phenomenon depends
on the pair $(a,n)$. But using Han's Theorem \ref{hansthm} and Corollary \ref{deltastrongsemistable} one can exhibit also this property explicitly.

\begin{exa}
\label{exampleforallmostallarithmetic}
Let $n=5$ and $p$ be an odd prime number with $\gcd(p,5)=1$. For all $e\in\N$ we can write $p^e=10\cdot l+r$ for some $l\in\N$ and $r\in\{1,3,7,9\}$. Then $\tfrac{p^e}{5}$ is 
$2l+\tfrac{1}{5}$, $2l+\tfrac{3}{5}$, $2l+1+\tfrac{2}{5}$ resp. $2l+1+\tfrac{4}{5}$ and hence the nearest element in $L_{\text{odd}}$ to 
$(\tfrac{p^e}{5},\tfrac{p^e}{5},\tfrac{p^e}{5})$ is given by $(2l+1,2l,2l)$, $(2l+1,2l+1,2l+1)$, $(2l+1,2l+1,2l+1)$ resp. $(2l+1,2l+2,2l+2)$. Hence, the 
taxicab distance of $(\tfrac{p^e}{5},\tfrac{p^e}{5},\tfrac{p^e}{5})$ to $L_{\text{odd}}$ is $\tfrac{6}{5}$ in any case, showing 
$\delta(\tfrac{1}{5},\tfrac{1}{5},\tfrac{1}{5})=0$. Therefore, $\Omega_{\Prim^2}|_C$ is strongly semistable in almost all characteristics and semistable 
in characteristic zero by Theorem \ref{fermatgenericfibersemistability}. The only exceptional prime numbers are $2$ and $5$.
\end{exa}

If $\Syz(X^a,Y^a,Z^a)$ is not semistable on the Fermat curve in characteristic $0$ then Theorem \ref{fermatgenericfibersemistability} allows to compute its Harder-Narasimhan filtration via reduction to positive characteristic. It turns out that the HN-filtration in characteristic $0$ coincides with the HN-filtration in positive characteristic $p$ (see Theorem \ref{almarfilt}) for almost all prime numbers.

\begin{thm}
\label{hnfiltrationcharzero}
Let $k$ be a field of characteristic $0$ and denote by $$C:= \Proj(k[X,Y,Z]/(X^n+Y^n+Z^n))$$ the Fermat curve of degree $n$ over $k$. Further, let $a\geq 1$ be an integer and write $a=nl+r$ with $0 \leq r < n$. Set $\Sc:=\Syz_C(X^a,Y^a,Z^a)$.
If $l$ is even and $r > \frac{2n}{3}$ or $l$ is odd and $r<\frac{n}{3}$, then $\Sc$ is not semistable and its HN-filtration is given
by the short exact sequence 
$$0 \lto \Oc_C(-m) \lto \Sc \lto \Oc_C(m-3a) \lto 0,$$
where $m$ is defined as in Theorem \ref{almarfilt} in dependence of $l$.
\end{thm}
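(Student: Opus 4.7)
The plan is to derive the characteristic-$0$ Harder--Narasimhan filtration by reduction modulo $p$ from Theorem~\ref{almarfilt}. First, the hypothesis on $l$ and $r$ is precisely the negation of condition~(1) of Theorem~\ref{fermatgenericfibersemistability}; so by the equivalence (1)$\Leftrightarrow$(5) of that theorem the syzygy bundle $\Sc$ is not semistable, and admits a unique HN-filtration $0\to\Lc\to\Sc\to\shM\to 0$ with $\Lc$ a line bundle of degree strictly greater than $\mu(\Sc)=-3an/2$. The task is to identify $\Lc$ with $\Oc_C(-m)$; the quotient will then follow from $\det(\Sc)=\Oc_C(-3a)$.

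After replacing $k$ by a finitely generated subring containing $\Z_n:=\Z[1/n]$, I work on the relative Fermat curve $\Cc:=\Proj(\Z_n[X,Y,Z]/(X^n+Y^n+Z^n))\to\Spec\Z_n$ with the flat family $\Syz_\Cc(X^a,Y^a,Z^a)$. A taxicab-distance computation as in the proof of Theorem~\ref{almarfilt} shows that our hypothesis on $l$ and $r$ implies $\delta(\tfrac{a}{n},\tfrac{a}{n},\tfrac{a}{n})\neq 0$ with Han's integer $s=0$ in every positive characteristic $p>\tfrac{3a}{2n}$ coprime to $n$. Hence Theorem~\ref{almarfilt} applies on each such special fiber and yields
$$0\to\Oc_{\Cc_p}(-m)\to\Sc_p\to\Oc_{\Cc_p}(m-3a)\to 0,$$
so $h^0(\Cc_p,\Sc_p(m))\geq 1$ for infinitely many closed points $p\in\Spec\Z_n$. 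By upper semicontinuity of $h^0$ in this flat family, the closed locus $\{q:h^0(\Cc_q,\Sc_q(m))\geq 1\}$ contains infinitely many closed points of $\Spec\Z_n$, hence equals all of $\Spec\Z_n$ and in particular contains the generic point. This produces a non-zero, and therefore injective, map $\Oc_C(-m)\to\Sc$ in characteristic $0$, whose saturation $\Lc'\subseteq\Sc$ is a line subbundle with $\deg(\Lc')\geq -mn$.

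A direct calculation from $a=nl+r$ shows that $-mn>-3an/2$ is equivalent to our hypothesis ($r>2n/3$ when $l$ is even, $r<n/3$ when $l$ is odd), so $\Lc'$ is destabilizing; uniqueness of the maximal destabilizing subbundle in rank $2$ yields $\Lc'\subseteq\Lc$, giving the lower bound $\deg(\Lc)\geq -mn$. For the matching upper bound I invoke upper semicontinuity of $\mu_{\max}$ (the Shatz stratification) for the flat family $\Sc$ over $\Spec\Z_n$: the closed locus $\{q:\mu_{\max}(\Sc_q)\geq -mn+1\}$ would, if it contained the generic point $(0)$, be all of $\Spec\Z_n$ and thus contradict $\mu_{\max}(\Sc_p)=-mn$ on the infinite set of primes above. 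Hence $\deg(\Lc)=-mn$, and the inclusion $\Oc_C(-m)\hookrightarrow\Lc$ is an isomorphism of line bundles of equal degree. Finally, $\det(\Sc)=\Oc_C(-3a)$ (read off from $0\to\Sc\to\Oc_C(-a)^3\to\Oc_C\to 0$) forces $\shM\cong\Oc_C(-3a)\otimes\Lc^{-1}\cong\Oc_C(m-3a)$.

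The principal obstacle is the Shatz semicontinuity step used for the upper bound on $\deg(\Lc)$; if one is willing to extend Lemma~\ref{nonstablefrobenius} to characteristic $0$ (the constructions in \cite{miyaoka} and \cite{almar} are characteristic-free polynomial identities derived from the Fermat relation), then a saturated inclusion $\Oc_C(-m)\hookrightarrow\Sc$ is available a priori, the reduction-mod-$p$ spreading out becomes superfluous, and the identification $\Lc\cong\Oc_C(-m)$ follows as soon as one checks that the constructed triple of polynomials has no common zero on $C$.
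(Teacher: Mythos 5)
Your proof is correct and follows essentially the same route as the paper: reduce to the relative Fermat curve over $\Spec \Z_n$, apply Theorem \ref{almarfilt} with $s=0$ on the special fibers, and transfer the filtration to the generic fiber by semicontinuity. The only (harmless) differences are that the paper obtains the non-trivial map $\Oc_C(-m) \to \Sc$ in characteristic $0$ directly from the characteristic-free syzygy of Lemma \ref{nonstablefrobenius} rather than via semicontinuity of $\Th^0$, and it bounds $\deg(\Lc)$ from above by spreading out the generic HN-filtration to an open subset of $\Spec \Z_n$ instead of invoking Shatz semicontinuity of $\mu_{\max}$ --- your closing remark correctly identifies this more elementary variant.
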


\begin{proof}
By Lemma \ref{nonstablefrobenius} we have a non-trivial mapping $\Oc_C(-m) \lto \Sc$. We have to show that this mapping has no zeros on $C$. Since $C$ is already defined over $\Z$ we can reduce to the case $k=\Q$.
Assume that we have a factorization $$\Oc_C(-m) \lto\Lc \lto \Syz_C(X^a,Y^a,Z^a),$$ where $\Lc$ is the maximal destabilizing subbundle. As in Theorem \ref{fermatgenericfibersemistability} we consider $C$ as the generic fiber of the relative curve
$$\Cc = \Proj(\Z_n[X,Y,Z]/(X^n+Y^n+Z^n)) \lto \Spec \Z_n.$$
On every special fiber $\Cc_p$ satisfying $p>\max\{n-3,\frac{3a}{2n}\}$ the HN-filtration of the restriction equals $0 \subset \Oc_{\Cc_p}(-m) \subset \Syz_{\Cc_p}(X^a,Y^a,Z^a)$ by Theorem \ref{almarfilt}. Since the HN-filtration of $\Syz_C(X^a,Y^a,Z^a)$ extends to an open subset of $\Spec \Z_n$, we obtain $\Lc \cong \Oc_C(-m)$.
\end{proof}

\begin{exa}
\label{brennerkatzmananswer}
In this example we provide an application of our results to the theory of \emph{tight closure} which is related to Hilbert-Kunz theory and strongly connected to the theory of (strongly) semistable vector bundles due to the work of Brenner
(see \cite{brennerbarcelona}). Via this geometric approach Problem \ref{brennerfermatproblem} is related to Hochster's question \cite[Question 13]{hochstertightsolid}.
In the homogeneous coordinate ring of the Fermat septic ($n = 7$), Brenner and Katzman have shown in \cite[Theorem 4.1]{brennerkatzmanarithmetic} by tedious computations that $X^3Y^3 \in (X^4,Y^4,Z^4)^*$ (the tight closure) for prime numbers $p \equiv 3 \!\! \mod 7$ and $X^3Y^3 \notin (X^4,Y^4,Z^4)^*$ for $p \equiv 2 \!\! \mod 7$. That is, the relative curve $\Cc: \Proj(\Z[X,Y,Z]/(X^7+Y^7+Z^7)) \ra \Spec \Z$
illustrates that tight closure does not behave uniformly in the fibers of an arithmetic deformation. A similar reasoning as in Examples \ref{syzsquarestableexample} and \ref{exampleforallmostallarithmetic} shows that $\Syz_C(X^4,Y^4,Z^4)$ is strongly semistable on the Fermat septic if and only if $p \equiv \pm 1 \!\! \mod 7$. In particular, the monomial $X^3Y^3$ of degree $6$ belongs to the tight closure of the ideal $(X^4,Y^4,Z^4)$ in $k[X,Y,Z]/(X^7+Y^7+Z^7)$ in these characteristics by \cite[Theorem 6.4]{brennerbarcelona}. Hence, we easily get infinitely many prime numbers where the tight closure inclusion does hold. Furthermore, the question for which prime numbers the syzygy bundle $\Syz(X^4,Y^4,Z^4)$ has strongly semistable reduction, raised in  \cite[paragraph before Corollary 4.3]{brennerkatzmanarithmetic}, is now answered.
\end{exa}

\bibliographystyle{alpha} 
\bibliography{bibfile}

\end{document}